\def\update{08/04/2025} 

\documentclass[11pt]{epiarticle}
\usepackage{epimath}
 
\setpapertype{A4}

\usepackage[english]{babel}

\usepackage[dvips]{graphicx} 

\newtheorem{conjecture}[theorem]{Conjecture} 
\title{Variations on Schanuel's Conjecture \\ for elliptic and quasi--elliptic functions\\ I: the split case}
\titlemark{Variations on Schanuel's Conjecture \hfill \update}
\author{Cristiana Bertolin \&\ Michel Waldschmidt}
\authormark{Cristiana Bertolin \&\ Michel Waldschmidt}
\date{\today} 
\journal{Hardy-Ramanujan Journal -- (yyyy), ---} 
\acceptation{submitted dd/mm/yyyy, accepted dd/mm/yyyy, revised \update}

\newcommand{\details}[1]{}

\newcommand\audessus[2]{\genfrac{}{}{0pt}{}{#1}{#2}}


\newcommand{\Z}{\mathbb{Z}}
\newcommand{\Q}{\mathbb{Q}} 
\newcommand{\C}{\mathbb{C}}
\newcommand{\G}{\mathbb{G}}
\renewcommand{\AA}{\mathbb{A}}
\newcommand{\PP}{\mathbb{P}} 

\newcommand{\rme}{\mathrm {e}}
\newcommand{\rmi}{\mathrm {i}}

\newcommand{\End}{\mathrm{End}}

\newcommand{\Lie}{\mathrm{Lie}\,}

\newcommand{\Qbar}{\overline{\Q}}

\newcommand{\cE}{\mathcal{E}}

\overfullrule=2mm
\begin{document}
 
\maketitle

\thanks{ 
The first author is supported by the project funded by the European Union – Next Generation EU under the National Recovery and Resilience Plan (NRRP), Mission 4 Component 2 Investment 1.1 - Call PRIN 2022 No. 104 of February 2, 2022 of Italian Ministry of University and Research; Project 20222B24AY (subject area: PE - Physical Sciences and Engineering) ``The arithmetic of motives and L-functions".
}
 
\begin{prelims}

\def\abstractname{Abstract}
\abstract{
 It is expected that Schanuel's Conjecture contains all ``reasonable" statements that can be made on the values of {\em the exponential function}. In particular it implies the Lin\-de\-mann-Weierstrass Theorem and the Conjecture on algebraic independence of logarithms of algebraic numbers.

Our goal is to state conjectures {\em \`a la Schanuel}, which imply conjectures {\em \`a la Lindemann-Weierstrass}, for the exponential map of an extension $G$ of an elliptic curve $\cE$ by the multiplicative group $\G_m$. In the present paper we assume that the extension is split, that is $G=\G_m\times \cE$. In a second paper in preparation we will deal with the non--split case, namely when the extension is not a product. Here we propose the {\em split semi--elliptic Conjecture}, which involves the exponential function and the Weierstrass $\wp$ and $\zeta$ functions, related with integrals of the first and second kind. In the second paper, our {\em non-split semi--elliptic Conjecture} will also involve Serre's functions, related with integrals of the third kind.
 
 We expect that our conjectures contain all ``reasonable" statements that can be made on the values of these functions.

In the present paper we highlight the geometric origin of the split semi--elliptic Conjecture: it is {\em equivalent to} the Grothendieck-Andr\'{e} generalized period Conjecture applied to the 1-motive 
\(
M=[u:\Z \rightarrow \G_m^s \times \cE^n ], 
\) 
which is the Elliptico--Toric Conjecture of the first author. 

We show that our split semi--elliptic Conjecture implies three theorems of Schneider on elliptic analogs of the Hermite--Lindemann 
and Gel'fond--Schneider's theorems, as well as a conjecture on the Weierstrass zeta function. 
}

\keywords{exponential function, quasi-elliptic function, Weierstrass $\wp$ function, Weierstrass $\zeta$ function, Weierstrass $\sigma$ function, conjectures \`a la Schanuel, conjectures \`a la Lindemann--Weierstrass, algebraic independence of logarithms, split semi--elliptic Conjecture}

\MSCclass{11J81, 11J89, 14K25}

\tableofcontents

\end{prelims}


\section{Introduction}

Let $\Qbar$ be the algebraic closure in $\C$ of the field $\Q$ of rational numbers. Since the transcendence degree of $\Qbar$ over $\Q$ is $0$, complex numbers are algebraically independent over $\Q$ if and only if they are algebraically independent over $\Qbar$ -- in this case we simply say that they are algebraically independent. When we speak of the transcendence degree (denoted $\mathrm{tran.deg}$) without specifying over which field, it means that it is over $\Q$. In the same vein, we say that a number is algebraic (resp. transcendental) if it is algebraic (resp. transcendental) over $\Q$, which means that it belongs (or not) to $\Qbar$. 

Hermite proved the transcendence of the number $\rme$ in 1873, 
Lindemann the transcendence of $\pi$ in 1882. These two results are special cases of the following statement, known as the {\em Hermite--Lindemann Theorem}: 

\begin{theorem}
[HL] 
If $\alpha$ is a non--zero complex number, then one at least of the two numbers $\alpha$, $\rme^\alpha$ is transcendental. 
\end{theorem}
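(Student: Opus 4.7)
The strategy is a proof by contradiction, using the classical transcendence machinery: assume that $\alpha \neq 0$ and $\beta := \rme^\alpha$ both lie in $\Qbar$, and derive an incompatibility between an analytic upper bound (from the Schwarz lemma) and an arithmetic lower bound (from a Liouville-type inequality) on the value of a carefully chosen auxiliary function. The crucial structural input is that the two functions $z$ and $\rme^z$ are algebraically independent over $\C$, so any non-zero polynomial $P \in \Z[X,Y]$ yields a non-zero entire function $F(z) := P(z,\rme^z)$ of finite exponential order.

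First, fixing a large integer parameter $L$ and setting $K := \Q(\alpha,\beta)$, I would apply Siegel's lemma to the linear system expressing the vanishing of $F(z) = P(z,\rme^z)$ to order $\ge T$ at $z=\alpha$, with $P$ of bi-degree at most $L$ and $T$ chosen of size $L^2/(2[K:\Q])$. The system has $(L+1)^2$ unknowns (the integer coefficients of $P$) and roughly $T\cdot [K:\Q]$ $\Q$-linear constraints, hence admits a non-trivial solution whose height is bounded by $c_1^L$ for a constant $c_1 > 0$ depending only on $\alpha,\beta$.

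Next, I would compare two estimates for the first potentially non-zero Taylor coefficient $\gamma := F^{(T)}(\alpha)/T!$. The analytic estimate uses the Schwarz lemma on a disk of radius $R$ centered at $\alpha$,
\[
|\gamma| \;\le\; R^{-T} \max_{|z-\alpha|=R} |P(z,\rme^z)| \;\le\; c_2^L \, R^{-T}\, \rme^{LR},
\]
which is very small for an appropriate choice of $R$. The arithmetic estimate notes that $\gamma$ is a polynomial expression in $\alpha,\beta$ with rational integer structure, hence lies in $K$ with controlled denominator and house, so Liouville's inequality gives $|\gamma| \ge c_3^{-LT}$ whenever $\gamma \neq 0$.

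The main obstacle is the parameter balance: one must choose $L, T, R$ so that the Schwarz bound strictly beats the Liouville bound, forcing $\gamma = 0$ and thereby raising the order of vanishing of $F$ at $\alpha$. An extrapolation step iterates this, and the contradiction is closed by a zero estimate showing that $P(z,\rme^z)$, with $P \neq 0$ of bi-degree $\le L$, cannot vanish to arbitrarily large order at a single point without $P$ itself being zero. All the delicate work lies in the numerics of the extrapolation and in a sharp form of the zero estimate; the conceptual skeleton is exactly the one Gel'fond and Schneider use for the transcendence of $\alpha^\beta$, specialised here to the pair of functions $(z,\rme^z)$.
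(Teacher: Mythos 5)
The paper states Theorem HL purely as classical background (Hermite 1873, Lindemann 1882) and gives no proof of it, so your attempt has to stand on its own; as written, it does not close, and the gap is exactly where you park ``all the delicate work''.

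The fatal choice is to interpolate at the \emph{single} point $z=\alpha$. The zero estimate for $P(z,\rme^z)$ of bidegree $\leqslant L$ at one point is $(L+1)^2-1$ (the functions $z^i\rme^{jz}$ solve a linear ODE of order $(L+1)^2$), so the extrapolation must push the vanishing order $T$ up to $\asymp L^2$. Now compare the two sides at that range of parameters. Arithmetically, each differentiation of $z^i\rme^{jz}$ multiplies the integer coefficients by about $2L$, so the house of $F^{(T)}(\alpha)$ is of size $H\,(2L)^T$, and Liouville only gives $\log|F^{(T)}(\alpha)|\geqslant -c\,[K:\Q]\,T\log L$ (also, your claim that Siegel's lemma yields $\log H(P)=O(L)$ is off for the same reason: with $T\asymp L^2$ conditions of logarithmic height $\asymp T\log L$ one gets $\log H(P)\asymp L^2\log L$). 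Analytically, the Schwarz lemma gain is $T\log R$, but the growth term $LR$ in $\log|F|_R$ forces $R\lesssim (T/L)\log R\asymp L\log L$, so the gain is itself only $\asymp T\log L$ --- the same order of magnitude as the Liouville loss, and once one accounts for the factor $T!\approx\rme^{2T\log L}$ separating $F^{(T)}(\alpha)$ from the Taylor coefficient $\gamma$, no choice of $R$ makes the upper bound beat the lower bound. The deficit is in the leading term, not in removable lower--order terms, so the extrapolation never produces a single new zero. This is precisely why Hermite--Lindemann is \emph{not} a routine corollary of the one--point Gel'fond scheme: the working proofs either use the infinitely many points $m\alpha$ ($m\in\Z$), at which both $z$ and $\rme^z$ take values in $\Q(\alpha,\rme^\alpha)$ --- this is Schneider's method, packaged in the Schneider--Lang criterion, where the factor gained in the Schwarz lemma is the \emph{number of points} $S\to\infty$ rather than $\log R$ --- or else Hermite's explicit identity, whose arithmetic miracle (divisibility of the relevant integers by $(p-1)!$) replaces the Liouville inequality altogether. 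To repair your argument, replace ``vanishing to order $T$ at $\alpha$'' by ``vanishing to order $T$ at each of $\alpha,2\alpha,\dots,S\alpha$'' and extrapolate in $S$.
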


The following more general statement is known as the {\em Lindemann--Weierstrass Theorem} (1885): 

\begin{theorem}
[LW]
 \label{Theorem:LW} 
If $\alpha_1,\dots,\alpha_s$ are $\Q$--linearly independent algebraic numbers, then the numbers $\rme^{\alpha_1},\dots,\rme^{\alpha_s}$ are algebraically independent.
\end{theorem}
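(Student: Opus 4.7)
The plan is to adapt Hermite's method, in the form used by Lindemann and Weierstrass themselves. I would proceed by contradiction. Assume a nontrivial relation $P(\rme^{\alpha_1}, \ldots, \rme^{\alpha_s}) = 0$ with a nonzero polynomial $P \in \Qbar[X_1, \ldots, X_s]$; expanding it yields
\[
\sum_{\mu} a_\mu \rme^{\beta_\mu} = 0
\]
with nonzero coefficients $a_\mu \in \Qbar$ and exponents $\beta_\mu = \mu_1 \alpha_1 + \cdots + \mu_s \alpha_s$. The $\Q$--linear independence of the $\alpha_i$ forces the $\beta_\mu$ to be pairwise distinct. A standard symmetrization trick---multiplying the relation over the orbit of $\mathrm{Gal}(\Qbar/\Q)$ acting on the $(a_\mu, \beta_\mu)$ and regrouping---reduces the problem to the following statement: if $\beta_1, \ldots, \beta_n$ are distinct algebraic numbers forming a Galois-stable set and $c_1, \ldots, c_n$ are nonzero rational integers, then $\sum_j c_j \rme^{\beta_j} \neq 0$.

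For the main analytic construction, I would introduce Hermite's auxiliary function. Fix a large prime $p$ and a rational integer $\ell$ such that every $\ell \beta_j$ is an algebraic integer. A suitably normalized polynomial of the form $f(x) = \ell^{np} \prod_j (x - \beta_j)^p$ (or a minor variant vanishing to high order at each $\beta_j$), together with $F(t) = \sum_{k \geq 0} f^{(k)}(t)$, satisfies the Hermite identity
\[
\rme^{z} F(0) - F(z) = \int_0^z \rme^{z - t} f(t)\, dt,
\]
proved by observing that $\frac{d}{dt}(\rme^{-t} F(t)) = -\rme^{-t} f(t)$. Multiplying by $c_j$, summing over $j$, and using the assumed vanishing to kill the $F(0)$ term produces the key quantity
\[
J \;:=\; \sum_{j=1}^n c_j F(\beta_j) \;=\; -\sum_{j=1}^n c_j \int_0^{\beta_j} \rme^{\beta_j - t} f(t)\, dt.
\]

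The contradiction arises from two incompatible estimates on $J$. A crude bound on the integrals yields $|J| \leq C^p$ for a constant $C$ depending only on the $c_j$ and the $\beta_j$. On the other side, each derivative $f^{(k)}(\beta_j)$ vanishes for $k < p$ and, for $k \geq p$, is an algebraic integer divisible by a high power of $p$; combined with the Galois-stability of $\{\beta_1,\ldots,\beta_n\}$, this forces $J/(p-1)!$ to be a rational integer. A delicate divisibility analysis---essentially showing that modulo $p$ the leading part of $J/(p-1)!$ is a nonzero norm of an algebraic integer---then guarantees $J \neq 0$, so $|J| \geq (p-1)!$. For $p$ sufficiently large, the factorial $(p-1)!$ dominates $C^p$, contradicting our assumption.

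The hard step is the arithmetic divisibility analysis: proving that the Galois-symmetrized sum is truly a rational integer, that it is divisible by exactly $(p-1)!$ and not by a higher power of $p$, and that its leading contribution mod $p$ cannot vanish once $p$ exceeds a bound depending on $\ell$, the $c_j$, and $\prod_{i \neq j}(\beta_i - \beta_j)$. Once this combinatorial bookkeeping of the derivatives $f^{(k)}(\beta_j)$ is set up correctly, the analytic side reduces to a routine integral estimate, and the two bounds collide for all large enough primes $p$.
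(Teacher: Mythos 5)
First, note that the paper does not prove this statement at all: Theorem \ref{Theorem:LW} is the classical Lindemann--Weierstrass theorem of 1885, cited as known background (and re-derived only conditionally, as a consequence of Schanuel's Conjecture). So there is no proof in the paper to compare against; your proposal must stand on its own as the classical Hermite-type argument, which is indeed the right route. The analytic skeleton is correct: the reduction to $\sum_\mu a_\mu\rme^{\beta_\mu}=0$ with distinct exponents, the Hermite identity for $F=\sum_k f^{(k)}$, and the collision between the bound $|J|\leqslant C^p$ and a factorial lower bound.

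There are, however, two genuine gaps in the arithmetic half. (i) With the auxiliary polynomial exactly as you wrote it, $f(x)=\ell^{np}\prod_j(x-\beta_j)^p$, \emph{every} derivative $f^{(k)}(\beta_j)$ with $k\geqslant p$ is divisible by $p!$, so $J/(p-1)!$ is divisible by $p$ and the ``nonzero mod $p$'' step has nothing to work with: the argument that $J\neq 0$ collapses. The standard fix is to take, for each index $i$, the polynomial $f_i(x)=\ell^{np}\prod_j(x-\beta_j)^p/(x-\beta_i)$, which vanishes only to order $p-1$ at $\beta_i$; the term $f_i^{(p-1)}(\beta_i)=\ell^{np}(p-1)!\prod_{j\neq i}(\beta_i-\beta_j)^p$ is then the one that survives modulo $p$ for large $p$. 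Your hedge ``a minor variant vanishing to high order at each $\beta_j$'' does not cover this, since the needed variant vanishes to order $p-1$ at one point. (ii) The reduced statement you aim for is not quite the right one, and the single quantity $J$ need not be a rational integer. After symmetrizing you must arrange not merely that $\{\beta_1,\dots,\beta_n\}$ is Galois-stable, but that the coefficients $c_j$ are constant on each conjugacy class (and you must check the symmetrized relation is nontrivial, e.g.\ by isolating a maximal exponent); even then, because $f_i$ is not Galois-symmetric, each $J_i$ is only an algebraic number, and one obtains a rational integer by taking the product $J_1\cdots J_n$, bounding it below by $((p-1)!)^n$ and above by $C^p$. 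With these two repairs the proof closes in the standard way.
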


\par\noindent This theorem is a consequence of the following conjecture proposed by Schanuel \cite{LangITN}, which is supposed to contain all ``reasonable" statements that can be made on the values of the exponential function:

\begin{conjecture} 
[Schanuel's Conjecture] \label{SchanuelConjecture}
 If $t_1,\dots,t_s$ are $\Q$--linearly independent complex numbers, then at least $s$ of the $2s$ numbers $t_1,\dots,t_s, \rme^{t_1},\dots, \rme^{t_s}$ are algebraically independent.
\end{conjecture}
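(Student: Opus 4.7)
The plan is to attempt the classical transcendence-theoretic attack via auxiliary functions, while acknowledging at the outset that this statement is a famously open problem: no proof is currently known for general complex $t_1,\dots,t_s$, and even the case $s=2$ in full generality is out of reach. What follows is therefore a description of the natural line of attack and of the obstacle that blocks it.

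Assume for contradiction that $\mathrm{tran.deg}\,\Q(t_1,\dots,t_s,\rme^{t_1},\dots,\rme^{t_s}) < s$. The first step is to extract a transcendence basis of size at most $s-1$ and to express the remaining numbers algebraically in terms of it. Next, following the Gel'fond--Schneider--Baker paradigm, I would introduce an auxiliary polynomial
\[
F(y_1,\dots,y_s) = \sum_{\uell} p(\uell)\, y_1^{\ell_1}\cdots y_s^{\ell_s}
\]
with coefficients $p(\uell)\in\Z$ provided by Siegel's lemma, and form the entire function $\Phi(z)=F\bigl(\rme^{t_1 z},\dots,\rme^{t_s z}\bigr)$. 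The hypothesis that $t_1,\dots,t_s$ are $\Q$--linearly independent is precisely what guarantees, via a Philippon--type zero estimate, that the functions $\rme^{t_1 z},\dots,\rme^{t_s z}$ are algebraically independent over $\C(z)$, so $F$ can be chosen so that $\Phi$ vanishes to prescribed order at many integer points without vanishing identically. Confronting the resulting Schwarz--type analytic upper bound for $|\Phi(N)|$ at a new integer $N$ with a Liouville--type arithmetic lower bound should, ideally, yield a contradiction to the assumed bound on the transcendence degree.

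The hard part, and the reason every known attempt collapses, lies in the arithmetic lower bound. Liouville's inequality demands absolute heights of the numbers involved, and when the $t_i$ are themselves transcendental no such heights are available; the algebraic relations binding the $t_i$ to the $\rme^{t_i}$ do not by themselves produce quantitative arithmetic control strong enough to beat the analytic upper bound. Baker's method treats the case where the $t_i$ are logarithms of algebraic numbers (the $\rme^{t_i}$ then supply the missing arithmetic data), and Theorem~[LW] treats the case where the $t_i$ are algebraic; Nesterenko's work on modular functions supplies further isolated instances. Closing the gap between these extremes would require a genuinely new arithmetic invariant attached to transcendental points, which is precisely what is not known.

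I therefore would not expect the above program to terminate in a complete proof. The realistic deliverable is rather to make explicit how far the auxiliary function paradigm carries, to pinpoint the missing ingredient, and to reformulate the statement in a framework where the difficulty can be isolated, for instance by interpreting the $2s$ numbers as periods of a $1$--motive and invoking the Grothendieck--Andr\'e period Conjecture; this is in fact the route followed in the present paper to obtain the split semi--elliptic analogue.
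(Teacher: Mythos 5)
You have correctly identified that the statement labelled ``Schanuel's Conjecture'' is exactly that --- a conjecture, stated in the paper without proof (the paper only cites \cite{LangITN} for its origin and \cite{B02} for its equivalence with the Grothendieck--Andr\'e period Conjecture applied to the 1-motive $[u:\Z\to\G_m^s]$). Your proposal is therefore honest and appropriately does not claim to prove anything: it is a survey of the Gel'fond--Schneider--Baker auxiliary-function strategy and a correct diagnosis of where it breaks down, namely the absence of a Liouville-type arithmetic lower bound when the $t_i$ carry no height data. Your closing remark, that the productive move is to recast the $2s$ numbers as periods of a 1-motive and appeal to the generalized period Conjecture, is precisely the perspective the paper adopts (Theorem \ref{G<=>SplitSchanuel} and section \ref{S:Elliptico--toric}). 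One small caution: the zero estimate you invoke does not need Philippon's machinery here --- the algebraic independence of $\rme^{t_1z},\dots,\rme^{t_sz}$ over $\C(z)$ follows already from the $\Q$-linear independence of the $t_i$ by elementary means --- but since no complete argument is being claimed, this does not affect your conclusion. There is nothing to repair: the statement is open, and no proof should be expected.
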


\medskip

 In other terms the transcendence degree over $\Q$ of the field $\Q(t_1,\dots,t_s, \rme^{t_1},\dots, \rme^{t_s})$ is at least $s$. 
This lower bound $s$ is clearly the best possible: for instance if $t_1,\dots,t_s$ are algebraic, then the transcendence degree is $s$ (and this is LW Theorem \ref{Theorem:LW}). If $ \rme^{t_1},\dots, \rme^{t_s}$ are algebraic numbers, say $\alpha_1,\dots,\alpha_n$, writing $t_\ell=\log\alpha_\ell$ ($\ell=1,\dots,s$), Schanuel's Conjecture \ref{SchanuelConjecture} reduces to the conjecture on the algebraic independence of $\Q$--linearly independent logarithms of algebraic numbers. 

 The previous statements deal with {\em the exponential function} of the torus $\G_{m}$
 \[
\begin{aligned} 
\exp: \Lie \G_{m \, \C} & \longrightarrow \G_m(\C) = \C^\times \\
\nonumber t & \longmapsto \rme^t. 
\end{aligned}
\] 

In \cite[\S 3]{B02} the first author proves that Schanuel's Conjecture \ref{SchanuelConjecture} is {\em equivalent to} the Grothendieck-Andr\'{e} generalized period Conjecture applied to the 1-motive 
\[
M=[u:\Z \rightarrow \G_m^s ], \; u(1)=( \rme^{t_1}, \dots, \rme^{t_s} ) \in \G_m^s (\C)
\] 
(see 
\cite[Letter of Y. Andr\'{e}]{B20} for a nice explanation of the Grothendieck-Andr\'{e} generalized period Conjecture).

 \medskip
 
In the present paper we state a conjecture {\em \`a la Schanuel}, which we call the {\em Split Semi--Elliptic Conjecture}, for a split extension of an elliptic curve $\cE$ by the multiplicative group: $\G_m\times \cE$. Based on \cite{B02}, we relate our conjecture with the motivic conjecture of Grothendieck--Andr\'e. In a forthcoming paper \cite{BWFutur}, we will deal with the case of non-split extensions, and relate it to the Grothendieck--Andr\'e conjecture thanks to \cite{B02,BFutur}, in connection with \cite{W_Schanuel}. 

Section \ref{S:The Split Semi--Elliptic Conjecture}~is devoted to the statement of the main Conjecture \ref{Conjecture:SplitSemiElliptic}. Section \ref{S:EllipticAndQuasiElliptic}~contains the necessary background on elliptic and quasi--elliptic functions, including addition, multiplication and division formulae.

The geometric origin of our Split Semi--Elliptic Conjecture is explained in section \ref{S:Elliptico--toric}, where we prove its equivalence with the Elliptico--Toric Conjecture \ref{Conjecture:elliptico--toric} of \cite[\S 1]{B02}. Proofs of some consequences of Conjecture \ref{Conjecture:SplitSemiElliptic} are given in section \ref{S:ConsequencesSplitElliptic}

\section{The Split Semi--Elliptic Conjecture}\label{S:The Split Semi--Elliptic Conjecture}

Let $\cE$ be an elliptic curve over $\C$. 
We use a Weierstrass parametrization of the exponential of $\cE$ with a Weierstrass $\wp$ function; we denote by $\Omega$ the lattice of periods of $\wp$, by $\zeta$ the Weierstrass zeta function associated to $\Omega$ and by $g_2$ and $g_3$ the invariants of $\wp$. Let $(\omega_1,\omega_2)$ be a pair of fundamental periods of $\Omega$:
\[
\Omega=\Z\omega_1+\Z\omega_2
\]
and let $k$ be the field of endomorphisms of $\cE$, namely $k:= \End (\cE) \otimes_\Z \Q$:
\[
k=\begin{cases}
\Q & \text{ in the non--CM case,}
\\
\Q(\tau) & \text{ in the CM case}
\end{cases}
\]
where $\tau:=\omega_2/\omega_1$. 

The exponential map of the product $ \G_m\times \cE$ (composed with a projective embedding) is 
\[
\begin{aligned}
 \exp_{\G_m\times \cE }: \Lie ( \G_m\times \cE )_\C =\C^2 & \longrightarrow ( \G_m\times \cE )_\C(\C) \subset (\AA\times \PP^{2} )(\C)\\
 \nonumber (t,z) & \longmapsto \big(\rme^t, \sigma(z)^3 [\wp(z) : \wp'(z) : 1] \big)
\end{aligned}
\] 
where $\AA$ is the affine line and $\PP^2$ the projective space of dimension $2$.

In terms of meromorphic functions, the four functions $z$, $\rme^z$, $\wp(z)$, $\zeta(z)$ are related with the parametrization of the exponential map of the algebraic group $\widetilde{G}$ of dimension $4$, which is the product of the additive group $\G_a$, the multiplicative group $\G_m$ and the non--trivial extension of $\cE$ by $\G_a$ (see \cite[Exercise 20.102]{M}): 
\[
\begin{matrix}
 \exp_{\widetilde{G}}:&\Lie( \widetilde{G}) _{\C}= \C^4 &\longrightarrow& \widetilde{G} (\C) \subset (\AA^2 \times \PP^{4})(\C)
 \hfill
\\
 &(z_0,z_1,w,z) & \longmapsto &\big((z_0,\rme^{z_1}),\hfill
 \\
 && &
\sigma(z)^3 [\wp(z) : \wp'(z) :1:
 w+\zeta(z):(w+\zeta(z))\wp'(z)+2\wp(z)^2] \big),
 \end{matrix}
\] 
where $\AA^2$ is the affine space of dimension $2$ and $\PP^4$ the projective space of dimension $4$.

The neutral element has coordinates $(0,1)[0:1:0:0:0]$. Under $\exp_{\widetilde{G}}$, the subgroup $\G_a^2$ of $ \widetilde{G}$ is the image of the subspace $\C\times\{0\}\times\C\times\{0\}$, 
 the subgroup $\G_m$ of $ \widetilde{G}$ is the image of the subspace $\{0\}\times\C\times\{0\}\times\{0\}$, 
 the kernel of $\exp_{\widetilde{G}}$ is the image of 
\[
\bigl\{
(0,2\pi\rmi a, -\eta(\omega),\omega)\; \mid\; a\in\Z,\; \omega\in\Omega
\bigr\}
\]
which is a discrete subgroup of rank $3$ of $\C^4$, and the torsion subgroup of $\widetilde{G}$
 is the image of the subgroup 
 \[
\bigl\{
(0,2\pi\rmi a/m, -\eta(\omega)/m,\omega/m)\; \mid\; a\in\Z,\; \omega\in\Omega, \; m\geqslant 1
\bigr\}\subset\C^4.
\]  
 
For the algebraic groups $\G_m\times\cE$ and $\widetilde{G}$, we suggest the following conjecture {\em \`a la Schanuel}.

 \begin{conjecture} 
[Split Semi--Elliptic Conjecture] \label{Conjecture:SplitSemiElliptic}
Let $\Omega$ be a lattice in $\C$. Let $t_1, \dots,t_s$ be $\Q$--linearly independent complex numbers and $p_1,\dots,p_n$ be $k$--linearly independent complex numbers in $\C \smallsetminus \Omega$.
Then the transcendence degree of the field 
\begin{equation}\label{Equation:K}
K:=\Q\bigl(t_1, \dots,t_s, e^{t_1},\dots ,e^{t_s}, g_2,g_3, p_1, \dots,p_n, \wp(p_1), \dots,\wp(p_n), \zeta(p_1), \dots,\zeta (p_n)\bigr)
\end{equation} 
is at least $s+2n$, unless $2\pi\rmi\Q\subset \Q t_1+\cdots+\Q t_s$ and $\Omega\subset k p_1+\cdots+kp_n$, in which case it is at least $s+2n-1$.
\end{conjecture}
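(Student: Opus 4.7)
My plan is to prove Conjecture \ref{Conjecture:SplitSemiElliptic} \emph{indirectly}, by establishing its equivalence with the Elliptico--Toric Conjecture \ref{Conjecture:elliptico--toric} of \cite[\S 1]{B02}, which is the Grothendieck--André generalized period Conjecture applied to a specific 1-motive. This is the natural elliptic generalization of the strategy used in \cite[\S 3]{B02} to recover Schanuel's Conjecture from Grothendieck--André applied to $[\Z\to\G_m^s]$. To the given data I associate the 1-motive
\[
M=\bigl[u:\Z\longrightarrow \G_m^s\times\cE^n\bigr],\qquad u(1)=\bigl(\rme^{t_1},\dots,\rme^{t_s},P_1,\dots,P_n\bigr),
\]
with $P_j=\exp_\cE(p_j)\in\cE(\C)$, and I expect its motivic periods to generate exactly the field $K$ of \eqref{Equation:K}.

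First, I identify the periods of $M$ coming from the comparison between its Betti and de Rham realizations. The period matrix will involve, besides the entries of $K$, the fundamental periods $\omega_1,\omega_2$, the quasi-periods $\eta_1,\eta_2$, and $2\pi\rmi$. The point is that Legendre's relation $\eta_1\omega_2-\eta_2\omega_1=2\pi\rmi$ together with the definitions of $\wp$ and $\zeta$ as meromorphic functions with data in the lattice $\Omega$ shows that these extra quantities do not raise the transcendence degree beyond that of $K$ (or, in degenerate cases, they can be expressed in terms of $K$). Second, I compute $\dim\Galmot(M)$ using the general structure of 1-motives: the dimension splits into contributions from the toric part $\G_m^s$, the abelian part $\cE^n$, and the extension class recorded by $u$. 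The $\Q$-linear independence of the $t_i$ forces the toric contribution to be maximal, while the $k$-linear independence of the $p_j$ plays the same role on the elliptic side (where $k$-linearity, not just $\Q$-linearity, is required in the CM case because $\End(\cE)\otimes\Q=k$ acts on the Hodge realization).

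Third, I identify the exact locus where the dimension drops. The toric component loses one dimension precisely when $u$ factors through a proper subtorus of $\G_m^s$, which translates into $2\pi\rmi\Q\subset\Q t_1+\cdots+\Q t_s$; symmetrically, the abelian component loses one dimension precisely when $u$ factors through a proper abelian subvariety of $\cE^n$, which translates into $\Omega\subset kp_1+\cdots+kp_n$. The delicate observation is that the first degeneration alone, or the second alone, does not decrease $\dim\Galmot(M)$ below $s+2n$, because the missing generator can be recovered from the full motive of $\G_m$, respectively of $\cE$, already sitting inside $M$ (those are $2\pi\rmi$ and the periods $\omega_1,\omega_2,\eta_1,\eta_2$, which remain transcendental). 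Only when \emph{both} degenerations occur simultaneously does a single genuine relation between the two blocks of the period matrix appear, yielding the bound $s+2n-1$ and explaining the asymmetric shape of the exceptional clause.

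The main obstacle will be the bookkeeping in this last step: one must verify that the exceptional locus described by the two inclusions $2\pi\rmi\Q\subset\sum\Q t_i$ and $\Omega\subset\sum kp_j$ matches exactly, and with multiplicity one, the motivic locus on which $\Galmot(M)$ drops by one dimension. A secondary difficulty is the case distinction $k=\Q$ versus $k=\Q(\tau)$, where the CM action must be compatible both with the Hodge decomposition and with the $k$-linear span conditions, so that the generic dimension $s+2n$ is produced in either case by the same formula. Once the equivalence is in place, the inequality for $\mathrm{tran.deg}\,K$ follows from the Grothendieck--André Conjecture applied to $M$.
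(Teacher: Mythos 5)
The statement you are trying to prove is a conjecture, and it remains one: your final step invokes the Grothendieck--Andr\'e generalized period Conjecture, which is itself open, so what your plan can deliver at best is an \emph{equivalence} between Conjecture \ref{Conjecture:SplitSemiElliptic} and the Elliptico--Toric Conjecture \ref{Conjecture:elliptico--toric} --- which is precisely the content of Theorem \ref{G<=>SplitSchanuel} of the paper, not a proof of the statement. The overall strategy therefore matches the paper's, but it must be presented as a reduction between two open conjectures, not as a proof.

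Within the reduction itself there are concrete gaps. First, your claim that Legendre's relation shows the extra periods $\omega_1,\omega_2,\eta_1,\eta_2,2\pi\rmi$ ``do not raise the transcendence degree beyond that of $K$'' is wrong in the generic case: when $2\pi\rmi\notin\Q t_1+\cdots+\Q t_s$ and $\Omega\cap(kp_1+\cdots+kp_n)=\{0\}$, these periods conjecturally add $4/[k:\Q]$ to the transcendence degree. What the argument actually needs is the \emph{upper} bound $\mathrm{tran.deg}_K\,K(\omega_1,\omega_2,\eta_1,\eta_2)\leqslant 4/[k:\Q]$, refined case by case (it drops by $1$ when $2\pi\rmi\in K$ via Legendre and relation \eqref{Equation:kappa}, and to $0$ when the half-periods are among the $p_i$), so that subtracting it from the Elliptico--Toric lower bound $\dim_\Q\langle t_\ell\rangle_\ell+4/[k:\Q]+2\dim_k\langle p_i\rangle_i$ yields $s+2n$ or $s+2n-1$ as required. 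Second, your trichotomy of degenerations misses the intermediate non--CM case where $\Omega\cap(kp_1+\cdots+kp_n)$ is nonzero but does not contain a full sublattice of $\Omega$ (the paper's case $\tilde{P}$); it needs separate accounting. Third, for the direction from Conjecture \ref{Conjecture:SplitSemiElliptic} to the Elliptico--Toric Conjecture one must \emph{augment} the data by $2\pi\rmi$ and the half-periods $\omega_1/2$ (and $\omega_2/2$ in the non--CM case) and invoke the exceptional clause of Conjecture \ref{Conjecture:SplitSemiElliptic}; moreover the whole reduction to representatives satisfying the starred independence conditions rests on Lemma \ref{ChoiceBase1}, i.e.\ on the addition, multiplication and division formulae for $\wp$ and $\zeta$ (Corollary \ref{Corollary:linearrelations}), which your sketch does not address.
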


We will call {\em exceptional} the case where $2\pi\rmi\Q\subset \Q t_1+\cdots+\Q t_s$ and $\Omega\subset k p_1+\cdots+kp_n$. As a matter of fact, we will see in section \ref{S:Elliptico--toric} that this exceptional case implies the non--exceptional cases. 

The lower bound for the transcendence degree cannot be improved:
\\
$\bullet$ 
In the exceptional case, the transcendence degree of $K$ is at most $s+2n-1$ for instance when the $s+n+2$ numbers $\rme^{t_1}, \dots,\rme^{t_s}, g_2,g_3, \wp(p_1),\dots,\wp(p_n)$ are all algebraic with $t_1=2\pi\rmi$, and further $p_1=\omega_1/2$, $p_2=\omega_2/2$ in the non--CM case, $p_1=\omega_1/2$ in the CM case (this follows from Legendre relation \eqref{Equation:Legendre} and relation \eqref{Equation:kappa} -- see Remark \ref{Remarque:GGPCEllipticCase}).
\\
$\bullet$ 
In the non--exceptional case, the transcendence degree of $K$ is at most $s+2n$ for instance when the $s+n+2$ numbers $t_1, \dots,t_s,g_2,g_3, p_1,\dots, p_n$ are all algebraic. In this case we have $2\pi\rmi\not\in \Q t_1+\cdots+\Q t_s$ and $\Omega\cap(k p_1+\cdots+kp_n)=\{0\}$ and so this case is not exceptional.
 
 \medskip
There are cases where the upper bound for the transcendence degree is not optimal: a consequence of \cite[Corollary 2.3]{W_Schanuel} is that given arbitrary $g_2,g_3$ with $g_2^3\not=27g_3^2$, for almost all $s+n$ tuples $( t_1, \dots,t_s,p_1,\dots,p_n)$, the $2s+3n$ numbers 
\[
t_1, \dots,t_s, e^{t_1},\dots ,e^{t_s}, p_1, \dots,p_n, \wp(p_1), \dots,\wp(p_n), \zeta(p_1), \dots,\zeta (p_n)
\] 
are algebraically independent over $\Q(g_2,g_3, \omega_1,\omega_2,\eta_1,\eta_2)$. 

In Theorem \ref{G<=>SplitSchanuel}, 
we prove that Conjecture \ref{Conjecture:SplitSemiElliptic} is equivalent to 
the Grothendieck-Andr\'{e} generalized period Conjecture applied to the 1-motive 
\[
 M=[u:\Z \rightarrow \G_m^s \times \cE^n ], \; u(1)=(\rme^{t_1}, \dots, \rme^{t_s} , P_1, \dots, P_n) \in (\G_m^s \times \cE^n )(\C),
\] 
 where $ P_i=[\wp(p_i): \wp'(p_i):1] $ for $i=1, \dots,n$, which is the Elliptico--Toric Conjecture (Conjecture \ref{Conjecture:elliptico--toric} below; see \cite[\S 1]{B02} and \cite[Letter of Y. Andr\'{e}]{B20}).

Schanuel's Conjecture \ref{SchanuelConjecture} is the case $n=0$ of Conjecture \ref{Conjecture:SplitSemiElliptic} when there is no elliptic curve.
In this case $\Omega\cap(k p_1+\cdots+kp_n)=\{0\}$.

Here is the case $s=0$ of Conjecture \ref{Conjecture:SplitSemiElliptic} when there is no $G_m$ factor (see also \cite[Conjecture 4.2]{BPSS}). 
 In this case $2\pi\rmi\not\in\Q t_1+\cdots+\Q t_s$. 
 
 \begin{conjecture}
[Elliptic Schanuel Conjecture] 
\label{Conjecture:EllipticSchanuel}
 Let $\Omega$ be a lattice in $\C$. Let $p_1,\dots,p_n$ be $k$--linearly independent elements in $\C \smallsetminus \Omega$. Then at least $2n$ of the $2+3n$ numbers 
\[
g_2,g_3,p_1, \dots,p_n, \wp(p_1), \dots,\wp(p_n), \zeta(p_1), \dots,\zeta (p_n) 
\]
are algebraically independent.
\end{conjecture}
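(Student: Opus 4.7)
The plan is to obtain Conjecture \ref{Conjecture:EllipticSchanuel} as the $s=0$ specialization of the Split Semi--Elliptic Conjecture \ref{Conjecture:SplitSemiElliptic}. Setting $s=0$ in \eqref{Equation:K} leaves exactly the $2+3n$ numbers $g_2,g_3,p_1,\dots,p_n,\wp(p_1),\dots,\wp(p_n),\zeta(p_1),\dots,\zeta(p_n)$ as generators of $K$, and the prescribed lower bound $s+2n$ reduces to $2n$. Since the transcendence degree of a finitely generated extension $\Q(x_1,\dots,x_N)$ equals the maximum number of algebraically independent elements among $x_1,\dots,x_N$, the inequality $\mathrm{tran.deg}\,K\geqslant 2n$ is precisely the statement of Conjecture \ref{Conjecture:EllipticSchanuel}. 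The hypotheses on the $p_i$'s are imported verbatim, while the hypotheses on the $t_\ell$'s become vacuous.

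It remains to verify that the exceptional clause of Conjecture \ref{Conjecture:SplitSemiElliptic} cannot be triggered in this situation. The first half of the exceptional condition requires $2\pi\rmi\Q\subset\Q t_1+\cdots+\Q t_s$; with $s=0$ the right--hand side is the zero subspace, which does not contain the non--zero element $2\pi\rmi$. Hence the exceptional case is automatically excluded and the non--exceptional lower bound $s+2n=2n$ applies directly, without any correction by $-1$. This is exactly the remark made immediately before the statement of Conjecture \ref{Conjecture:EllipticSchanuel}.

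The main obstacle is not this reduction, which is formal, but the underlying Conjecture \ref{Conjecture:SplitSemiElliptic} itself. Via the equivalence with the Grothendieck--Andr\'e generalized period Conjecture for the 1-motive $[u:\Z\to\G_m^s\times\cE^n]$ announced in the introduction, the case $s=0$ is equivalent to the period conjecture for $[u:\Z\to\cE^n]$. This remains open in essentially every non--trivial instance; already for $n=1$ it would yield the algebraic independence of $\wp(p)$ and $\zeta(p)$ for generic transcendental $p\notin\Omega$, a conclusion well beyond the reach of current transcendence methods.
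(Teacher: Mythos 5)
Your derivation is correct and matches the paper exactly: the authors likewise present Conjecture \ref{Conjecture:EllipticSchanuel} as the $s=0$ specialization of Conjecture \ref{Conjecture:SplitSemiElliptic}, observing (as you do) that with $s=0$ the condition $2\pi\rmi\in\Q t_1+\cdots+\Q t_s$ fails vacuously, so the exceptional case cannot occur and the bound is $s+2n=2n$. Your closing caveat that the statement remains conditional on the main (open) conjecture is also consistent with how the paper treats it.
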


\medskip
 If we assume the lattice $\Omega$ to have algebraic invariants $g_2$ and $g_3$ and $t_1, \dots,t_s$, $p_1,\dots,p_n$ to be algebraic, from Conjecture \ref{Conjecture:SplitSemiElliptic}, we get a conjecture \`{a} la Lindemann--Weierstrass.
 
\begin{conjecture}[Split Semi--Elliptic LW Conjecture] 
\label{Conjecture:SplitSemiEllipticLW}
 Let $\Omega$ be a lattice in $\C$ with algebraic invariants $g_2,g_3$. If 
 \begin{itemize}
\item $t_1, \dots,t_s$ are $\Q$--linearly independent algebraic numbers,
\item $p_1,\dots,p_n$ are $k$--linearly independent algebraic numbers, 
 \end{itemize} 
 then the $s+2n$ numbers 
 $e^{t_1},\dots ,e^{t_s}, \wp(p_1), \dots,\wp(p_n), \zeta(p_1), \dots,\zeta (p_n) $
 are algebraically independent.
\end{conjecture}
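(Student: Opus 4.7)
The plan is to deduce Conjecture \ref{Conjecture:SplitSemiEllipticLW} as a direct corollary of the Split Semi--Elliptic Conjecture \ref{Conjecture:SplitSemiElliptic}, by combining the lower bound it supplies for the transcendence degree of $K$ with the trivial upper bound forced by the algebraicity hypotheses.

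First I would apply Conjecture \ref{Conjecture:SplitSemiElliptic} to $t_1,\dots,t_s$ and $p_1,\dots,p_n$: the $\Q$-- and $k$--linear independence hypotheses are in force by assumption, so the conjecture yields $\mathrm{tran.deg}\, K\geq s+2n$ unless we are in the exceptional case $2\pi\rmi\Q\subset \Q t_1+\cdots+\Q t_s$ and $\Omega\subset kp_1+\cdots+kp_n$, where the lower bound drops to $s+2n-1$. The next step is to check that this exceptional case cannot occur under our algebraicity hypotheses: the $\Q$--subspace $\Q t_1+\cdots+\Q t_s$ lies inside $\Qbar$ (the empty sum being $\{0\}$ if $s=0$), whereas $2\pi\rmi$ is transcendental by the Hermite--Lindemann theorem; the first defining condition of the exceptional case thus already fails, and we sit in the generic regime where $\mathrm{tran.deg}\, K\geq s+2n$.

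Next I would exploit the algebraicity of $t_1,\dots,t_s, g_2,g_3, p_1,\dots,p_n$ to delete them from the generating set of $K$ without affecting its transcendence degree, obtaining
\[
\mathrm{tran.deg}\, K=\mathrm{tran.deg}\, \Q\bigl(\rme^{t_1},\dots,\rme^{t_s}, \wp(p_1),\dots,\wp(p_n), \zeta(p_1),\dots,\zeta(p_n)\bigr).
\]
The right--hand field is generated by $s+2n$ complex numbers, so its transcendence degree is at most $s+2n$. Comparing this ceiling with the lower bound coming from Conjecture \ref{Conjecture:SplitSemiElliptic} forces equality throughout, which is exactly the algebraic independence of these $s+2n$ generators.

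There is no genuine obstacle inside this deduction: the whole argument is book--keeping on the number of generators, together with the observation that the transcendence of $2\pi\rmi$ automatically excludes the exceptional case. The real difficulty lies entirely in Conjecture \ref{Conjecture:SplitSemiElliptic} itself, which we are permitted to assume here.
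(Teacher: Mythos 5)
Your deduction is correct and follows essentially the same route the paper sketches after stating the conjecture: exclude the exceptional case and then compare the lower bound $s+2n$ from Conjecture \ref{Conjecture:SplitSemiElliptic} with the trivial upper bound obtained after discarding the algebraic generators $t_\ell, g_2,g_3,p_i$. The only detail worth adding is that one must also check the hypothesis $p_i\in\C\smallsetminus\Omega$ of Conjecture \ref{Conjecture:SplitSemiElliptic}; this (and the failure of the second defining condition of the exceptional case, which the paper also records) follows from Corollary \ref{corollary:Schneider}, since nonzero periods of a Weierstrass $\wp$ function with algebraic invariants are transcendental.
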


Lindemann's Theorem on the transcendence of $\pi$ implies that when $t_1, \dots,t_s$ are algebraic numbers, then $2\pi\rmi\not\in\Q t_1+\cdots+\Q t_s$.
Also, Corollary \ref{corollary:Schneider} of Schneider's Theorem \ref{Th:Schneider}.1 implies that the poles $\not=0$ of a Weierstrass $\wp$ function with algebraic invariants are transcendental; consequently, when $p_1,\dots,p_n$ are algebraic numbers, then $\Omega\cap(k p_1+\cdots+kp_n)=\{0\}$.

 Two cases of Conjecture \ref{Conjecture:SplitSemiEllipticLW} are known: 
 the LW Theorem \ref{Theorem:LW} and the Phi\-lip\-pon--W\"ustholz Theorem, which states that {\em the values of a Weierstrass $\wp$ function, with algebraic invariants and with complex multiplication, at $k$--linearly independent algebraic numbers, are algebraically independent } (see \cite[Corollaire 0.3]{P83} and \cite[Korollar 2]{W83}).

Another consequence of Conjecture \ref{Conjecture:SplitSemiElliptic} is the following {\em Conjecture on algebraic independence of semi--elliptic logarithms of algebraic points in the split case}, which contains the conjectures on algebraic independence of $\Q$--linearly independent (ordinary) logarithms of algebraic numbers (a special case of Schanuel's Conjecture), and on algebraic independence of $k$--linearly independent elliptic logarithms of algebraic points (a special case of Conjecture \ref{Conjecture:EllipticSchanuel}).
Very few unconditional results are known, including the six exponentials Theorem and some elliptic analogs (see for instance \cite[Chap. II]{LangITN} and \cite{Ramachandra}).

\begin{conjecture}
[Split Semi--Elliptic Logarithms Conjecture] 
\label{Conjecture:SplitSemiEllipticLogarithms}
Let $\Omega$ be a lattice in $\C$ with algebraic invariants $g_2,g_3$.
Let $s$, $m$, $n$ be non negative integers with $0\leqslant m\leqslant n$. Let
 \begin{itemize}
\item $t_1, \dots,t_s$ be $\Q$--linearly independent complex numbers such that the numbers $\alpha_\ell:=\rme^{t_\ell}$ ($\ell=1,\dots,s$) are algebraic; write $t_\ell=\log\alpha_\ell$;
\item $p_1,\dots,p_n$ be $k$--linearly independent elements of $\C\smallsetminus\Omega$ such that the numbers $\wp(p_i)$ $(1\leqslant i\leqslant m$) and $\zeta(p_j)$ $(m+1\leqslant j\leqslant n$) are algebraic. 
 \end{itemize} 
 Then the $s+n$ numbers 
 \[
 \log\alpha_1,\dots,\log\alpha_s,\; p_1,\dots,p_n
 \]
 are algebraically independent.
\end{conjecture}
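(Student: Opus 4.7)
The plan is to deduce Conjecture \ref{Conjecture:SplitSemiEllipticLogarithms} from the Split Semi--Elliptic Conjecture (Conjecture \ref{Conjecture:SplitSemiElliptic}) applied to the given data. Under the hypotheses of the Logarithms Conjecture, the numbers $e^{t_\ell}$, $g_2$, $g_3$, $\wp(p_i)$ for $1 \leq i \leq m$, and $\zeta(p_j)$ for $m+1 \leq j \leq n$ are all algebraic. Consequently the field $K$ of \eqref{Equation:K} reduces to
\[
K = \Qbar\bigl(t_1, \dots, t_s,\; p_1, \dots, p_n,\; \wp(p_{m+1}), \dots, \wp(p_n),\; \zeta(p_1), \dots, \zeta(p_m)\bigr),
\]
which is generated over $\Qbar$ by exactly $s + n + (n-m) + m = s + 2n$ elements, so $\mathrm{tran.deg}\, K \leq s + 2n$.

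In the non--exceptional case, Conjecture \ref{Conjecture:SplitSemiElliptic} gives $\mathrm{tran.deg}\, K \geq s + 2n$, forcing equality; hence the listed $s + 2n$ generators of $K$ are algebraically independent over $\Qbar$, and in particular the $s + n$ numbers $t_1, \dots, t_s, p_1, \dots, p_n$ are algebraically independent over $\Q$.

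In the exceptional case ($2\pi\rmi \in \Q t_1 + \cdots + \Q t_s$ and $\Omega \subset k p_1 + \cdots + k p_n$), the lower bound degrades to $\mathrm{tran.deg}\, K \geq s + 2n - 1$. To recover the missing unit of transcendence degree I would exploit the Legendre relation $\omega_1 \eta_2 - \omega_2 \eta_1 = 2\pi\rmi$: expanding $\omega_i = \sum_j c_{ij}\, p_j$ with $c_{ij} \in k$, $2\pi\rmi = \sum_\ell r_\ell\, t_\ell$ with $r_\ell \in \Q$, and $\eta_i = 2\,\zeta(\omega_i/2)$ via the addition, multiplication, and (in the CM case) complex--multiplication formulas for $\zeta$ from section \ref{S:EllipticAndQuasiElliptic}, one obtains a non--zero polynomial $L \in \Qbar[X_1, \dots, X_s, Y_1, \dots, Y_n, W_{m+1}, \dots, W_n, Z_1, \dots, Z_m]$ that vanishes at $(t_\ell, p_j, \wp(p_j), \zeta(p_i))$. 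This forces the prime ideal $\mathfrak{p}$ of all relations among the generators to have height exactly $1$, and the algebraic independence of $t_1, \dots, t_s, p_1, \dots, p_n$ is then equivalent to $\mathfrak{p} \cap \Qbar[X_1,\dots,X_s,Y_1,\dots,Y_n] = 0$, i.e.\ to $L$ having no non--constant factor in $\Qbar[X_1,\dots,X_s,Y_1,\dots,Y_n]$.

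The main obstacle is this last verification. Writing $L$ as a polynomial in the $Z_j$'s and $W_j$'s with coefficients in $\Qbar[X_1,\dots,X_s,Y_1,\dots,Y_n]$, one must check that these coefficients admit no common non--constant factor. Heuristically, if some $p_j$ with $j \leq m$ enters the $k$-expansion of $\omega_1$ or $\omega_2$, the addition formula for $\zeta$ contributes $\zeta(p_j)$ linearly to $\eta_i$ (so the variable $Z_j$ appears in $L$); otherwise only $p_j$ with $j > m$ occur, and the rational $\wp$-part of the multiplication formula contributes $\wp(p_j)$ (so the variable $W_j$ appears in $L$). Careful bookkeeping with the formulas of section \ref{S:EllipticAndQuasiElliptic}, together with the $k$-linear independence of the $p_j$ and the fact that $(\omega_1,\omega_2)$ is a $\Z$-basis of $\Omega$, should rule out total cancellation and complete the argument.
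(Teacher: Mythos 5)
Your non--exceptional case is correct and coincides with the paper's argument: the hypotheses make $\rme^{t_\ell}$, $g_2$, $g_3$, $\wp(p_i)$ ($i\leqslant m$) and $\zeta(p_j)$ ($j>m$) algebraic, so $K$ has exactly $s+2n$ remaining generators, the lower bound $s+2n$ forces them to be algebraically independent, and the $s+n$ target numbers are among them.

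The gap is in the exceptional case. You correctly identify that the Legendre relation supplies the one missing relation and that the ideal of relations then has height $1$, but you then reduce the statement to showing that the resulting polynomial $L$ (or rather the irreducible factor of $L$ generating the relation ideal) does not lie in $\Qbar[X_1,\dots,X_s,Y_1,\dots,Y_n]$, and you only offer a heuristic (``careful bookkeeping \dots should rule out total cancellation''). That verification \emph{is} the content of the claim in this case, so the proof is not complete; moreover, with general $p_j$ the expansion of $\eta_1,\eta_2$ through the addition, division and CM formulae makes $L$ genuinely intractable to factor by hand. The paper's route avoids this entirely: by Lemma \ref{ChoiceBase1} one may normalize $t_1=2\pi\rmi$ and $p_1=\omega_1/2$, $p_2=\omega_2/2$ (non--CM), resp.\ $p_1=\omega_1/2$ (CM), so that $\zeta(p_i)=\eta_i/2$; then Legendre \eqref{Equation:Legendre} (together with \eqref{Equation:kappa} in the CM case) is \emph{linear} in $\eta_2$ (resp.\ determines $\eta_1,\eta_2,\omega_2$ over $\Q(2\pi\rmi,\omega_1)$), so one of the non--target generators can simply be deleted from the generating set. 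The field $K$ is then generated by $s+2n-1$ elements containing the $s+n$ target numbers, the exceptional lower bound $s+2n-1$ forces these generators to be algebraically independent, and removing the remaining $n-1$ auxiliary generators (remark \eqref{Equation:remarque}) gives the conclusion. You should replace your factorization argument by this elimination step.
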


We deduce Conjecture \ref{Conjecture:SplitSemiEllipticLogarithms} from Conjecture \ref{Conjecture:SplitSemiElliptic} in section \ref{S:ConsequencesSplitElliptic} We complete this section with some more consequences of Conjecture \ref{Conjecture:SplitSemiElliptic}, proofs of which will also be given in section \ref{S:ConsequencesSplitElliptic}
 
In \cite[Satz II]{Schneider} and \cite[Zweites Kapitel, \S 4,Satz 15]{SchneiderLivre} (see also \cite[Chap.~20]{M}), assuming that the invariants $g_2$ and $g_3$ are algebraic, Schneider proves three theorems on the common algebraic values of two algebraically independent functions, namely ($\alpha z+\beta \zeta(z), \wp(z)$), $(\rme^{\alpha z},\wp(z)$) and ($\wp(z), \wp^\star(z)$), when $\wp$ and $\wp^\star$ are two algebraically independent Weierstrass $\wp$ functions with algebraic invariants. In the next statement for the third case we restrict ourselves to $\wp^\star(z)=c^2\wp(cz)$ with $c\in\C^\times$. 

\begin{theorem} [Schneider] \label{Th:Schneider} 
\hfill 
\begin{enumerate}
\item
Let $\alpha$, $\beta$ be two complex numbers with $(\alpha,\beta)\not=(0,0)$ and let $p\in\C \smallsetminus\Omega$. Then one at least of the six numbers 
$\alpha$, $\beta$, $g_2$, $g_3$, $\wp(p)$, $\alpha p+\beta\zeta(p)$ is transcendental. 
\item
Let $p\in\C \smallsetminus\Omega$ and let $\alpha$ be a nonzero complex number. Then one at least of the five numbers $\alpha$, $g_2$, $g_3$, $\rme^{\alpha p}$, $\wp(p)$ is transcendental. 
\item
Let $\alpha$ and $p $ be two complex number such that $\alpha\not\in k$, $p\not\in \Omega$ and $\alpha p\not\in \Omega$. Then 
one at least of the five numbers $\alpha$, $g_2$, $g_3$, $\wp(p)$, $\wp(\alpha p)$ is transcendental. 
\end{enumerate}
\end{theorem}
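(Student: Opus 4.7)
For each of the three items, the plan is to argue by contradiction: assume every listed number is algebraic, apply Conjecture~\ref{Conjecture:SplitSemiElliptic} to a carefully chosen set of parameters $(s,n;t_1,\dots,t_s;p_1,\dots,p_n)$, and derive the contradiction by observing that the algebraicity hypotheses collapse the field $K$ of \eqref{Equation:K} into a subfield whose transcendence degree is strictly smaller than the conjecture's lower bound.

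For item~(1), take $s=0$, $n=1$ and $p_1=p$. Since $s=0$, the exceptional alternative is vacuous (one cannot have $2\pi\rmi\Q\subseteq\{0\}$), so the conjecture gives $\mathrm{tran.deg}(K)\geqslant 2$ with $K=\Q(g_2,g_3,p,\wp(p),\zeta(p))$. If $\beta\neq 0$, solving the relation $\alpha p+\beta\zeta(p)\in\Qbar$ for $\zeta(p)$ puts $\zeta(p)\in\Qbar(p)$; if $\beta=0$ then $\alpha\neq 0$ and the algebraicity of $\alpha p$ places $p$ in $\Qbar$. Either way $K\subseteq\Qbar(p,\zeta(p))$ has transcendence degree at most $1$, a contradiction. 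Item~(3) is handled in the same spirit with $s=0$, $n=2$, $p_1=p$, $p_2=\alpha p$: the hypothesis $\alpha\notin k$ is precisely the required $k$-linear independence of $p_1,p_2$; the exceptional case is again forbidden by $s=0$; the conjecture yields $\mathrm{tran.deg}(K)\geqslant 4$; and the algebraicity of $\alpha,g_2,g_3,\wp(p),\wp(\alpha p)$ collapses $K$ into $\Qbar(p,\zeta(p),\zeta(\alpha p))$, of transcendence degree at most $3$, a contradiction.

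Item~(2) uses the same template with $s=1$, $n=1$, $t_1=\alpha p$, $p_1=p$: the algebraicity of the five listed numbers collapses $K$ into $\Qbar(p,\zeta(p))$, of transcendence degree at most $2$, while Conjecture~\ref{Conjecture:SplitSemiElliptic} asserts $\mathrm{tran.deg}(K)\geqslant 3$ outside the exceptional case and $\geqslant 2$ inside it. The non-exceptional case is immediate, while the exceptional case requires simultaneously $\alpha p\in\Q\cdot 2\pi\rmi$ and $\Omega\subseteq kp$. In the non-CM setting the second condition would force $\tau=\omega_2/\omega_1\in\Q$ and is thus impossible, so only the CM sub-case remains.

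The main technical obstacle will be this residual CM sub-case. There $p$ must be a non-trivial torsion point $\mu\omega_1$ with $\mu\in k$, and the identity $\alpha\mu\omega_1=2\pi\rmi r$ with $r\in\Q^\times$ forces $\omega_1/\pi$ to be algebraic. To eliminate this configuration, the plan is to apply Conjecture~\ref{Conjecture:SplitSemiElliptic} a second time, this time with $s=1$, $n=1$, $t_1=2\pi\rmi$, $p_1=\omega_1/2$: this too is exceptional in the CM setting, so the conjecture only provides $\mathrm{tran.deg}\bigl(\Qbar(\pi,\omega_1,\eta_1)\bigr)\geqslant 2$. On the other hand, Legendre's relation together with the CM-structural identity (which, in the algebraic setting, expresses $\eta_2$ as an algebraic linear combination of $\eta_1$ and $\omega_1$) yields an identity of the form $\omega_1\cdot L(\omega_1,\eta_1)=2\pi\rmi$ with algebraic coefficients. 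Substituting $\omega_1\in\Qbar\cdot\pi$ into this identity forces $\eta_1\in\Qbar(\pi)$, so $\Qbar(\pi,\omega_1,\eta_1)\subseteq\Qbar(\pi)$ has transcendence degree at most $1$, contradicting the lower bound of $2$ and completing the proof.
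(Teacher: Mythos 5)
Your argument is a correct derivation of the statement \emph{from Conjecture \ref{Conjecture:SplitSemiElliptic}}, but be aware that Theorem \ref{Th:Schneider} is an unconditional classical result of Schneider which the paper cites ([Sc37, Satz II], [Sc57]) and never reproves; what you have established is the implication ``Conjecture \ref{Conjecture:SplitSemiElliptic} $\Rightarrow$ Theorem \ref{Th:Schneider}'', which is exactly the content of the paper's Proposition \ref{G=>S} together with the deduction that follows it in Section \ref{S:ConsequencesSplitElliptic}, not a proof of the theorem itself.

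Read as that implication, your proof matches the paper's almost step for step. Items (1) and (3) are the cases $n=1$ and $n=2$ of the $s=0$ specialization (Conjecture \ref{Conjecture:EllipticSchanuel}), as in Proposition \ref{G=>S}(1) and (3); your ``collapse $K$ into $\Qbar(p,\zeta(p))$, resp.\ $\Qbar(p,\zeta(p),\zeta(\alpha p))$'' is the contrapositive of the paper's ``the remaining numbers are algebraically independent''. For item (2) the paper also takes $s=n=1$, $t=\alpha p$, $p_1=p$ and isolates the same exceptional CM sub-case; there it notes that the exceptional bound of the conjecture is equivalent to the algebraic independence of $\pi$ and a nonzero period $\omega$ (via \eqref{Equation:Legendre} and \eqref{Equation:kappa}), so that $\alpha=t/p\in\Qbar\,\pi/\omega$ is transcendental. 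You reach the same point by contraposition: assuming $\alpha$ algebraic you get $\omega_1\in\Qbar\pi$ and contradict that independence through a second application of the conjecture to $(2\pi\rmi,\omega_1/2)$. The only detail worth adding is that the coefficient of $\eta_1$ in your linear form $L$ is $(C\tau^2-A)/(C\tau)=-(2A+B\tau)/(C\tau)\neq 0$ because $\tau\notin\Q$ (and even if some coefficient vanished, the identity $\omega_1L(\omega_1,\eta_1)=2\pi\rmi$ with $\omega_1\in\Qbar\pi$ would still force $\pi\in\Qbar$, an absurdity), so the step ``$\eta_1\in\Qbar(\pi)$'' is sound.
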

 
An equivalent formulation to Theorem \ref{Th:Schneider}.1 is the following statement: {\em if $g_2$, $g_3$ and $\wp(p)$ are algebraic, then the three numbers $1,p,\zeta(p)$ are linearly independent over the field of algebraic numbers. } In particular for $p \in \C \smallsetminus \Omega$, one at least of the four numbers $g_2$, $g_3$, $p$, $\wp(p)$ is transcendental and one at least of the four numbers $g_2$, $g_3$, $\wp(p)$, $\zeta(p)$ is transcendental. This last statement implies the transcendence of one at least of the three numbers $g_2,g_3,\eta(\omega)$ when $\omega$ is a nonzero period (recall $\wp'(\omega/2)=0$ and $\zeta(\omega/2)=\eta(\omega)/2$ when $\omega\in\Omega$ and $\omega/2\not\in\Omega$). 
\medskip

Here is a corollary to Theorem \ref{Th:Schneider}.

\begin{corollary} \label{corollary:Schneider}
 Let $p \in \C \smallsetminus \Omega$. Assume $g_2$, $g_3$ and $\wp(p)$ are algebraic. Then $p$ is transcendental. 
 \\
 Further, let $\alpha\in\Qbar\smallsetminus k$. Then $\alpha p\not\in\Omega$ and $\wp(\alpha p)$ is transcendental.
 \end{corollary}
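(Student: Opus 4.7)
The first assertion is an immediate corollary of the reformulation of Theorem \ref{Th:Schneider}.1 already spelled out in the text: for $p \in \C \smallsetminus \Omega$, one of the four numbers $g_2, g_3, p, \wp(p)$ must be transcendental, so with three of these four algebraic by hypothesis, $p$ itself must be transcendental.

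For the second assertion my plan is to first establish $\alpha p \notin \Omega$ by a reductio using Theorem \ref{Th:Schneider}.3, and then read off the transcendence of $\wp(\alpha p)$ from the same theorem. Suppose for contradiction that $\alpha p = \omega \in \Omega$. Because $\alpha \notin k \supseteq \Q$ forces $\alpha \neq 0$, the case $\omega = 0$ would give $p = 0 \in \Omega$, violating our hypothesis; hence $\omega \neq 0$. The key trick is to replace $\alpha$ by $\beta := 1 + \alpha$. One still has $\beta \notin k$ (since $1 \in k$ but $\alpha \notin k$), and $\beta p = p + \omega \notin \Omega$ because $p \notin \Omega$, while periodicity of $\wp$ yields
\[
\wp(\beta p) = \wp(p + \omega) = \wp(p),
\]
which is algebraic. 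Theorem \ref{Th:Schneider}.3 applied to the pair $(\beta, p)$ would then require one of $\beta, g_2, g_3, \wp(p), \wp(\beta p)$ to be transcendental, contradicting the algebraicity just collected. Hence $\alpha p \notin \Omega$.

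With $\alpha p \notin \Omega$ secured, Theorem \ref{Th:Schneider}.3 applied directly to the original pair $(\alpha, p)$ forces one of $\alpha, g_2, g_3, \wp(p), \wp(\alpha p)$ to be transcendental, and since the first four are algebraic by hypothesis, $\wp(\alpha p)$ is transcendental as claimed. The sole non-routine step is the shift $\alpha \mapsto 1 + \alpha$, which converts the would-be pole $\alpha p = \omega$ of $\wp$ into a translate $p + \omega$ at which $\wp$ takes the algebraic value $\wp(p)$; this is the entire obstacle in the argument, and once spotted it is very mild.
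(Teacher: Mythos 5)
Your proof is correct. The first assertion is handled exactly as in the paper (Theorem \ref{Th:Schneider}.1 with $\alpha=1$, $\beta=0$). For the key step of the second assertion --- ruling out $\alpha p\in\Omega$ --- you take a genuinely different route. The paper supposes $\omega:=\alpha p\in\Omega\smallsetminus\{0\}$, picks $n\geqslant 2$ with $\omega/n\not\in\Omega$, and applies Theorem \ref{Th:Schneider}.3 to $(\alpha/n,\,p)$, using that $\wp(\omega/n)$ is a torsion value and hence algebraic over $\Q(g_2,g_3)$ by Lemma \ref{Lemma:division}; you instead apply Theorem \ref{Th:Schneider}.3 to $(1+\alpha,\,p)$, using periodicity to get $\wp((1+\alpha)p)=\wp(p+\omega)=\wp(p)$, which is algebraic by hypothesis. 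Your shift $\alpha\mapsto 1+\alpha$ is slightly more economical, since it needs only the periodicity of $\wp$ and no input about division points, whereas the paper's division trick invokes Lemma \ref{Lemma:division} (which it needs elsewhere anyway). All hypotheses of Theorem \ref{Th:Schneider}.3 are verified in your application ($1+\alpha\not\in k$ because $1\in k$ and $\alpha\not\in k$; $p+\omega\not\in\Omega$ because $p\not\in\Omega$), and the concluding application of Theorem \ref{Th:Schneider}.3 to $(\alpha,p)$ coincides with the paper's.
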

 
 \begin{proof}[Proof of Corollary \ref{corollary:Schneider}]
 The fact that $p$ is transcendental follows from Theorem \ref{Th:Schneider}.1 with $\alpha=1$ and $\beta=0$. 
 
 Assume $\omega:=\alpha p\in\Omega\smallsetminus\{0\}$. Let $n\geqslant 2$ be such that $\omega/n\not\in\Omega$. Set $\alpha'=\alpha/n$. Then $\alpha'p=\omega/n\not\in\Omega$ and since $\wp(\alpha' p)=\wp(\omega/n)$ is algebraic (this is a torsion point --- see Lemma \ref{Lemma:division}), the five numbers $g_2$, $g_3$, $\alpha'$, $\wp(p)$, $\wp(\alpha' p)$ are algebraic, which contradicts Theorem \ref{Th:Schneider}.3. 
 
 Hence $\alpha p\not\in\Omega$. From Theorem \ref{Th:Schneider}.3 we deduce that $\wp(\alpha p)$ is transcendental. 
\null\hfill$\square$
\end{proof}
 
 \medskip
 
\par\noindent 
The assumption that $\alpha$ does not belong to $k$ is necessary: if $\alpha$ belongs to $k$ and if $\wp(p)$ is algebraic over $\Q(g_2,g_3)$, then also $\wp(\alpha p)$ is algebraic over $\Q(g_2,g_3)$ (see section \ref{S:EllipticAndQuasiElliptic}).
 
In \cite[\S 5]{K} the second author proposes an analog to Schneider's Corollary \ref{corollary:Schneider} for the Weierstrass zeta function. The next remark shows that this conjecture needs to be modified, in order to eliminate two special cases, associated with elliptic curves having nontrivial automorphisms. 

\begin{remark}\label{Remark:specialcases} \rm
\hfill\break
1. Assume $g_3=0$, which means that the elliptic curve $\cE$ is a CM curve with field of endomorphisms $\Q(\rmi)$. When $\alpha$ satisfies $\alpha^4=1$, then 
\[
\wp(\alpha z)=\alpha^2\wp(z),\quad \zeta(\alpha z)=\alpha^3\zeta(z).
\] 
2. 
 Assume $g_2=0$, which means that the elliptic curve $\cE$ is a CM curve with field of endomorphisms $\Q(\zeta_6)$ where $\zeta_6$ is a primitive root of unity of order $6$. When $\alpha$ satisfies $\alpha^6=1$, then 
\[
\wp(\alpha z)=\alpha^4\wp(z),\quad \zeta(\alpha z)=\alpha^5\zeta(z).
\] 
\end{remark}
In each of these two cases, when $\zeta(p)$ is algebraic, then $\zeta(\alpha p)$ is also algebraic. 

\begin{conjecture} 
[$\zeta$--Conjecture]
 \label{Conjecture:zeta}
 Assume that $g_2$ and $g_3$ are algebraic. 
Let $p \in \C \smallsetminus \Omega$. Assume that $\zeta(p)$ is algebraic. Let $\alpha$ be an algebraic number different from $0,1,-1$. Assume also that $\alpha^4\not=1$ if $g_3=0$ and $\alpha^6\not=1$ if $g_2=0$. Then $\alpha p\not\in\Omega$ and $\zeta( \alpha p)$ is transcendental.
\end{conjecture}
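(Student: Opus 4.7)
The plan is to split the proof into two cases according to whether $\alpha\in k$ or not. A preliminary observation used throughout is that $\wp(p)$ is transcendental: by Schneider's Theorem \ref{Th:Schneider}.1 (see the remark following it), at least one of $g_2,g_3,\wp(p),\zeta(p)$ is transcendental, and only $\wp(p)$ is not among the algebraic hypotheses.

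To show $\alpha p\notin\Omega$, I would assume for contradiction that $\alpha p=\omega\in\Omega\setminus\{0\}$. If $\alpha\in k$, write $\alpha=r/m$ with $r\in\End(\cE)$ and $m\in\Z_{>0}$; then $rp=m\omega\in\Omega$, making $p$ a torsion point of $\cE$ and $\wp(p)$ algebraic, contradicting the preliminary fact. If $\alpha\notin k$, pick $m\geqslant 2$ such that $\omega/m\notin\Omega$ and apply Conjecture \ref{Conjecture:SplitSemiElliptic} with $s=0$, $n=2$, $p_1=p$, $p_2=(\alpha/m)p=\omega/m$: these are $k$-linearly independent (since $\alpha/m\notin k$), the case is non--exceptional (since $s=0$), so the conjecture gives transcendence degree $\geqslant 4$ for the field \eqref{Equation:K}. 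But the generators $g_2,g_3,\zeta(p),\alpha/m$ are algebraic and so is $\wp(\omega/m)$ (a torsion $\wp$-value), bounding the transcendence degree above by $3$, a contradiction.

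For the transcendence of $\zeta(\alpha p)$, the case $\alpha\notin k$ is immediate: $p$ and $\alpha p$ are $k$-linearly independent elements of $\C\setminus\Omega$, so Conjecture \ref{Conjecture:SplitSemiElliptic} with $s=0$, $n=2$, $p_1=p$, $p_2=\alpha p$ gives transcendence degree $\geqslant 4$; removing the algebraic generators $g_2,g_3,\zeta(p),\alpha$ leaves $p,\wp(p),\wp(\alpha p),\zeta(\alpha p)$, which must then be algebraically independent, so $\zeta(\alpha p)$ is transcendental. In the case $\alpha\in k$, $p$ and $\alpha p$ are $k$-dependent so the conjecture with $n=2$ is unavailable; instead I plan to exploit the multiplication and, if needed, division formulae from Section \ref{S:EllipticAndQuasiElliptic} to establish a relation
\[
\zeta(\alpha p)-\bar\alpha\,\zeta(p)=\phi_\alpha(p),
\]
where $\phi_\alpha$ is an algebraic function of $z$ over $\Qbar(\wp(z),\wp'(z))$. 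If $\phi_\alpha$ were constant, then differentiating would yield $\wp(\alpha z)=(\bar\alpha/\alpha)\wp(z)$, which by Remark \ref{Remark:specialcases} forces $\alpha$ to be a root of unity in $k$, a case excluded by the hypotheses on $\alpha$. Non-constancy of $\phi_\alpha$ then implies that for any $c\in\Qbar$ the equation $\phi_\alpha(z)=c$ has only finitely many solutions on a cover of $\cE$, all with algebraic $\wp$-coordinate; since $\wp(p)$ is transcendental, $\phi_\alpha(p)\notin\Qbar$, and therefore $\zeta(\alpha p)=\bar\alpha\,\zeta(p)+\phi_\alpha(p)$ is the sum of an algebraic and a transcendental number, hence transcendental.

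The main obstacle will be making the formula for $\phi_\alpha$ explicit when $\alpha\in k\setminus\End(\cE)$ (for instance $\alpha=1/n$ in the non--CM case): there $\phi_\alpha$ depends on $\wp(\alpha p),\wp'(\alpha p)$ through the $n$-th division polynomial, forcing one to work in the algebraic closure $\overline{\Qbar(\wp(z),\wp'(z))}$ rather than in the function field itself, and verifying non-constancy in this tower is the technical heart of the argument.
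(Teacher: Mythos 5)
Your overall architecture matches the paper's proof (Proposition \ref{G=>W}): Schneider's theorem gives the transcendence of $\wp(p)$, the case $\alpha\notin k$ is settled by the $n=2$ instance of the Elliptic Schanuel Conjecture, and the case $\alpha\in k$ is reduced to a multiplication formula for $\zeta$ plus a Senthil--Kumar--type lemma (your ``finitely many solutions with algebraic $\wp$-coordinate'' argument is exactly Lemma \ref{Lemma:SenthilKumar}, which the paper cites rather than reproves). Your argument that $\alpha p\notin\Omega$ is fine, and in the sub-case $\alpha\in k$ it is even unconditional and slightly cleaner than the paper's; your count in the sub-case $\alpha\notin k$ (transcendence degree $\leqslant 3$ against $\geqslant 4$) is correct once one notes, as you implicitly do, that $p_2=(\alpha/m)p_1$ is interalgebraic with $p_1$.

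There is, however, a genuine gap in the case $\alpha\in k$ when $\cE$ has complex multiplication and $\alpha\notin\Q$. Writing $\alpha=r_1+r_2\tau$ with $r_2\neq0$, the multiplication formula (Table 3, packaged as Proposition \ref{Proposition:MultiplicationFormula}(2) and Corollary \ref{CorollaryMultiplicationFormula}) reads
\[
\zeta(\alpha z)=\beta_1\zeta(z)+\beta_2 z+\Xi_\alpha(z),\qquad \beta_1=\bar\alpha,\quad \beta_2=-\frac{\kappa r_2}{C},
\]
with $\Xi_\alpha$ algebraic over $\Q(g_2,g_3,\wp(z))$ but with an extra term \emph{linear in $z$} coming from the quasi-period constant $\kappa$ of \eqref{Equation:kappa}. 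Your relation $\zeta(\alpha p)-\bar\alpha\zeta(p)=\phi_\alpha(p)$ omits this term; since $z$ is transcendental over $\C(\wp(z),\wp'(z))$, the left-hand side is \emph{not} an algebraic function over $\Qbar(\wp(z),\wp'(z))$ when $\kappa r_2\neq0$, so your $\phi_\alpha$ does not exist as claimed. Restoring the term, the conclusion ``algebraic plus transcendental'' no longer suffices: $\zeta(\alpha p)=\bar\alpha\zeta(p)+\beta_2 p+\Xi_\alpha(p)$ is an algebraic number plus a sum of two numbers ($\beta_2 p$ and $\Xi_\alpha(p)$) which are each transcendental but whose sum could a priori be algebraic. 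The paper closes this by invoking the conjecture once more: item 1 of Proposition \ref{G=>S} (the $n=1$ case of Conjecture \ref{Conjecture:EllipticSchanuel}) gives the algebraic \emph{independence} of $p$ and $\wp(p)$, hence of $p$ and $\Xi_\alpha(p)$, so that $\Q(p,\zeta(\alpha p))$ has transcendence degree $2$. You never invoke this conjectural input, and without it the CM sub-case does not close. (By contrast, the ``technical heart'' you anticipate --- division polynomials for $\alpha=1/n$ --- is unproblematic: Lemma \ref{Lemma:division} and the explicit description of $f_{n,m}$ in Table 2 already place everything inside $\Qbar(g_2,g_3,\wp(z/m),\wp'(z/m))$, a finite extension of $\Qbar(g_2,g_3,\wp(z),\wp'(z))$, where the non-constancy criterion $n\neq\pm m$ is proved in the paper.)
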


The only special case of Conjecture \ref{Conjecture:zeta} which is known so far is \cite[Corollary 2.1]{K}: {\em assume $\wp(p)$ is transcendental. Then, with at most one exception, for $r$ a positive rational number, the number $\zeta(rp)$ is defined and is transcendental over the field $\Q(g_2,g_3)$.}
 
In Propositions \ref{G=>S} and \ref{G=>W}, we show that Conjecture \ref{Conjecture:SplitSemiElliptic} implies Schneider's Theorem \ref{Th:Schneider} and the $\zeta$--Conjecture \ref{Conjecture:zeta}.

\section{Elliptic and quasi--elliptic functions}\label{S:EllipticAndQuasiElliptic}
 
Consider the following Weierstrass functions attached to our lattice $\Omega = \Z \omega_1 + \Z \omega_2$ in $\C$ having elliptic invariants $g_2,g_3$. 

\begin{itemize}
 
 \item The canonical product of Weierstrass associated with $\Omega$ is the sigma function
 \[
 \sigma(z)=z\prod_{\omega\in\Omega \smallsetminus\{0\}}\left(1-\frac z \omega\right) \rme^{ \frac z \omega + \frac {z^2}{2\omega^2}}.
 \]
 \item 
 The Weierstrass zeta function is the logarithmic derivative of the sigma function: $\zeta=\sigma'/\sigma$.
 \item 
 The Weierstrass elliptic function is $\wp=-\zeta' $.
\end{itemize}

\subsection{Periodicity}
The periods of the Weierstrass elliptic function $\wp$ are elliptic integrals of the first kind. 
The Weierstrass zeta function $\zeta$ has quasi--periods $\eta(\omega)=\zeta(z+\omega)-\zeta(z)$ which are given by elliptic integrals of the second kind. 
 
The periodicity relations satisfied by the Weierstrass functions are, for $\omega\in\Omega$, 
\\
$\bullet$ \quad
$\wp(z+\omega)=\wp(z)$,
\\
$\bullet$ \quad 
$\zeta(z+\omega)=\zeta(z)+\eta(\omega) $,
\\
$\bullet$ \quad $
\sigma(z+\omega)=\epsilon(\omega)\sigma(z) \rme^{\eta(\omega)(z+\frac \omega 2)}
$ with $\epsilon(\omega)=1$ if $\omega/2\in\Omega$, $\epsilon(\omega)=-1$ if $\omega/2\not\in\Omega$.
 
 The quasi--periodicity of the Weierstrass zeta function defines a $\Z$--linear map 
\[
\begin{aligned}
\eta: \; & \Omega \to \C, 
\\
&\omega \mapsto \eta(\omega). 
\end{aligned}
\] 
We set $\eta_i= \eta(\omega_i)$ for $i=1,2$. 

 {\em Legendre relation} plays an important role: 
\begin{equation}\label{Equation:Legendre}
 \omega_2 \eta_1 - \omega_1 \eta_2 = 2\pi\rmi,
\end{equation}
 with the $+$ sign when the imaginary part of $\omega_2/\omega_1$ is positive --- cf. \cite[(20.411)]{WW}, \cite[Chap.~IV \S 2 Th.2]{Cha}, \cite[Exercise 20.33]{M}.

Let
$\cE$ be the elliptic curve over $\C$ defined by the lattice $\Omega$. 
Recall that if $\cE$ has no complex multiplication, the field of endomorphisms $k$ of $\cE$ is the field $\Q$ of rational numbers, while if $\cE$ has complex multiplication, then $k$ is an imaginary quadratic extension of $\Q$, more precisely $ k=\Q(\tau)$ with $\tau :=\frac{\omega_2}{\omega_1}$. In both cases, $\Q \subseteq k \subseteq \Qbar$. 

For convenience, we use small letters for elliptic logarithms of points in $\cE(\C)$ which are written with capital letters: $\exp_{\cE}(p)=P \in \cE (\C)$ for $p \in \Lie \cE_\C$.

 \subsection{Addition and multiplication formulae for Weierstrass functions} 
 
 In this section we sum up addition and multiplication formulae for Weierstrass functions.
 
The following addition formulae are well known. Among many references are
\cite[I.1 formulae (8), (9), (10) p.~159-160]{F}, 
\cite[Chap.~XX, 20.31 page 441 and 
20.53 page 451]{WW}\footnote{Beware that $(2\omega_1,2\omega_2)$ denotes a pair of fundamental periods of $\cE$ in \cite{WW}.}, 
 \cite[Chap.~1]{LangECDA},
 \cite[Chap. 3]{Silverman},
\cite[Chap.~III \S 4]{Cha},
 \cite[Chap.~1 \S 3]{LangEC},
 \cite[Chap. 3 \S 10]{Cox},
 \cite[Chap. 9]{Washington}, 
 \cite[Chap.~20]{M}.
 
 \bigskip 
 
 \begin{tabular}{ |p{15cm}| } 
\hline
\multicolumn{1}{|c|}{} \\
\multicolumn{1}{|c|}{\textbf{Addition Formulae}} \\
\multicolumn{1}{|c|}{} \\
\hline \rule[-4mm]{0cm}{1cm}
$\wp(z_1+z_2) = \frac{1}{4} \Big( \frac{\wp'(z_1)-\wp'(z_2)}{\wp(z_1)-\wp(z_2)}\Big)^2 - \wp(z_1)-\wp(z_2) \in \Q(g_2,g_3,\wp(z_1),\wp(z_2),\wp'(z_1),\wp'(z_2))$\\
\hline \rule[-4mm]{0cm}{1cm}
$\zeta(z_1+z_2) - \zeta(z_1)- \zeta(z_2) = \frac{1}{2} \frac{\wp'(z_1)-\wp'(z_2)}{\wp(z_1)-\wp(z_2)}\in \Q(g_2,g_3,\wp(z_1),\wp(z_2),\wp'(z_1),\wp'(z_2))$ \\ 
\hline \rule[-4mm]{0cm}{1cm}
$ \frac{\sigma (z_1+z_2) \sigma (z_1-z_2)}{\sigma (z_1)^2\sigma (z_2)^2}= \wp(z_2)-\wp(z_1) \in \Q(g_2,g_3,\wp(z_1),\wp(z_2))$ \\ 
\hline
 \end{tabular}
 \begin{center}
{\bf Table 1}
 \end{center}
 
 \medskip
 \medskip
 
 For the multiplication by a rational number, in this section we prove the following formulae, where, for any integers $n,m$ with $m\not=0$, $f_{n,m}(z)$ is a function in $\Q(g_2,g_3,\wp(\frac{z}{m}), \wp'(\frac{z}{m}))$ which is constant if and only if $n = \pm m$ or $n=0$. 
 \medskip
 \medskip
 
 \begin{tabular}{ |p{15cm}| }
\hline
\multicolumn{1}{|c|}{} \\
\multicolumn{1}{|c|}{\textbf{ Multiplication by $\frac{n}{m}$ formulae, with $n,m$ integers, $m\not=0$}} \\
\multicolumn{1}{|c|}{} \\
\hline \rule[-4mm]{0cm}{1cm}
$\wp \big(\frac{nz}{m}\big) = \wp(z) - \frac{f''_{n,m}(z)f_{n,m}(z)-f'_{n,m}(z)^2}{n^2 f_{n,m}(z)^2} $\\
\hline \rule[-4mm]{0cm}{1cm}
$\zeta\big(\frac{nz}{m}\big) = \frac{n}{m} \zeta(z) + \frac{f'_{n,m}(z)}{mn f_{n,m}(z)}$ \\ 
\hline \rule[-4mm]{0cm}{1cm}
$ \sigma\big(\frac{nz}{m}\big)^{m^2} = \sigma (z)^{n^2} f_{n,m}(z)$ \\ 
\hline
 \end{tabular}
 \begin{center}
{\bf Table 2}
 \end{center}
 
 \medskip
 \medskip

\begin{proof}[Proof of the formulae in Table 2]
These formulae are special cases of \cite[Proposition 5]{Reyssat82} (his number $\tau$ is a quotient of two nonzero periods, the case of a rational number is included). See also \cite[I.2 p.184 and I.3 p.210]{F}. For the convenience of the reader we include a sketch of proofs. 

For $n\in\Z$, the meromorphic function 
\[
f_{n,1}(z):=\frac {\sigma(nz)}{\sigma(z)^{n^2}}
\]
belongs to $\Q(g_2,g_3,\wp(z),\wp'(z))$ (see for instance \cite[\S 1]{Wald79}). 

For $n$ and $m$ in $\Z$ with $m>0$ define
\begin{equation}\label{def:f_{n,m}}
f_{n,m}(z):=\frac {\sigma(nz/m)^{m^2}}{\sigma(z)^{n^2}}
\cdotp
 \end{equation}
 We have  
 \[
 f_{-n,m}(z)=(-1)^m f_{n,m}(z).
 \]
Since 
 \[
 f_{n,m}(mz) = \frac{ \sigma(nz)^{m^2}}{\sigma(mz)^{n^2}} = \frac{ \sigma(nz)^{m^2}}{\sigma(z)^{n^2m^2}} \cdot \frac{ \sigma(z)^{n^2m^2}}{\sigma(mz)^{n^2}} = \frac{f_{n,1}(z)^{m^2}}{f_{m,1}(z)^{n^2}},
 \]
from the periodicity with respect to $\Omega$ of the function $f_{n,1}(z)$ we deduce, for any period $\omega$, 
 \[ 
 f_{n,m}(z+m \omega) = f_{n,m}\left(m\left(\frac{z}{m}+ \omega\right) \right)= \frac{f_{n,1}(\frac{z}{m}+ \omega)^{m^2}}{f_{m,1}(\frac{z}{m}+ \omega)^{n^2}} = \frac{f_{n,1}(\frac{z}{m})^{m^2}}{f_{m,1}(\frac{z}{m})^{n^2}} =f_{n,m}(z), 
 \]
which means that the function $f_{n,m}(z)$ is periodic with respect to the lattice $m\Omega$. Hence by \cite[Chap.~9 Theorem 9.3]{Washington} $f_{n,m}(z)$ belongs to the field $\C(\wp(\frac{z}{m}), \wp'(\frac{z}{m}))$. From the Taylor expansion of the $\sigma$ function at the origin (see \cite[\S 10.5 page 391]{MOS} or \cite[\S 10 B Lemma 10.12]{Cox}), we deduce that the coefficients of the function $f_{n,m}(z)$ belong to $\Q\big(g_2,g_3)$:
 \[
 f_{n,m}(z) \in \Q\Big(g_2,g_3,\wp\Big(\frac{z}{m}\Big), \wp'\Big(\frac{z}{m}\Big)\Big).
 \] 
 We have $f_{0,m}(z)=0$. Assume now $n\not=0$. The logarithmic derivative of \eqref{def:f_{n,m}} gives 
 \begin{equation}\label{equation:zeta}
 \zeta\left(\frac{nz}{m}\right) = \frac{n}{m} \zeta(z) + \frac{f'_{n,m}(z)}{mn f_{n,m}(z)}\cdotp
\end{equation}
This formula shows that the function $ \frac{f'_{n,m}(z)}{mn f_{n,m}(z)}$ depends only on $n/m$. 
 
If the function $\frac{f'_{n,m}(z)}{ f_{n,m}(z)}$ is constant and $n\not=0$, then from equation \eqref{equation:zeta} we deduce that the poles of $\zeta\left(\frac{nz}{m}\right) $ are the same as the poles of $ \zeta(z) $, hence $(n/m)\Omega=\Omega$, which means that $n/m$ is an automorphism of $\cE$, and since $n/m$ is a rational number we have $n/m=\pm 1$. In particular $ f_{n,m}(z)$ is constant if and only if $n=\pm m$:
 \[
f_{m,m} (z)=1,\quad f_{-m,m} (z)=(-1)^m
 \]
for $m>0$. 

One more derivative yields
 \[ 
 \wp \left(\frac{nz}{m}\right) = \wp(z) - \frac{f''_{n,m}(z)f_{n,m}(z)-f'_{n,m}(z)^2}{n^2 f_{n,m}(z)^2} \cdotp
 \] 
\null\hfill$\square$
\end{proof}
 
 \begin{lemma}\label{Lemma:division}
 Let $m\geqslant 1$.
 \begin{enumerate}
\item
The field $\Q(g_2,g_3,\wp(z))$ is a finite extension of $\Q(g_2,g_3,\wp(mz))$.
\item
Let $p\in\C\smallsetminus\Omega$. Then $\wp(p/m)$ and $\zeta(p/m) -\zeta(p)/m$ are algebraic over the field $\Q(g_2,g_3,\wp(p))$.
\item
Let $\omega\in\Omega$. Assume $\omega/m\not\in\Omega$. Then $\wp(\omega/m)$ and $\zeta(\omega/m)-\eta(\omega)/m$ are algebraic over the field $\Q(g_2,g_3)$.
\end{enumerate}
 \end{lemma}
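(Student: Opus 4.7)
Taking $(n,m)=(m,1)$ in Table 2 expresses $\wp(mz)$ as a rational function of $\wp(z),\wp'(z)$ with coefficients in $\Q(g_2,g_3)$; since $\wp(mz)$ is even in $z$ it descends to $\Q(g_2,g_3,\wp(z))$. Writing $\wp(mz)=P(\wp(z))/Q(\wp(z))$ with coprime $P,Q\in\Q(g_2,g_3)[X]$, the element $\wp(z)$ satisfies the non-zero polynomial $P(X)-\wp(mz)Q(X)$ over $\Q(g_2,g_3,\wp(mz))$, of degree $m^2$ (the degree of $[m]:\cE\to\cE$). This settles part 1. For the first claim of part 2 I apply this with $z=p/m$. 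For the second claim, take $(n,m)=(1,m)$ in Table 2 to get
\[
\zeta\!\left(\frac{z}{m}\right)-\frac{\zeta(z)}{m}=\frac{f'_{1,m}(z)}{m\,f_{1,m}(z)}\in \Q(g_2,g_3)\bigl(\wp(z/m),\wp'(z/m)\bigr),
\]
specialize $z=p$, and combine with the algebraicity of $\wp(p/m)$ over $\Q(g_2,g_3,\wp(p))$ and of $\wp'(p/m)$ over $\Q(g_2,g_3,\wp(p/m))$ via $\wp'^{2}=4\wp^{3}-g_{2}\wp-g_{3}$.

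\textbf{Part 3, algebraicity of $\wp(\omega/m)$.} Specialize the relation $P(\wp(z))=\wp(mz)\,Q(\wp(z))$ of part 1 at $z=\omega/m$: the left member is finite but $\wp(mz)=\wp(\omega)=\infty$, forcing $Q(\wp(\omega/m))=0$. Hence $\wp(\omega/m)$ is a root of the polynomial $Q\in\Q(g_2,g_3)[X]$ --- one recovers the classical $m$-th division polynomial --- and so is algebraic over $\Q(g_2,g_3)$; then $\wp'(\omega/m)$ is algebraic over $\Q(g_2,g_3)$ as well.

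\textbf{Part 3, algebraicity of $\zeta(\omega/m)-\eta(\omega)/m$.} Naively setting $z=\omega$ in the identity for $\zeta(z/m)-\zeta(z)/m$ is illegal since $\zeta$ has a pole at $\omega$. The plan is to use instead the companion identity from Table 2 with $(n,m)=(m,1)$,
\[
m\zeta(z)-\zeta(mz)=-\frac{f'_{m,1}(z)}{m\,f_{m,1}(z)},
\]
and compare the Laurent expansions of both sides at $z=\omega/m$. With $t=z-\omega/m$, on the left one uses $\zeta(\omega+mt)=1/(mt)+\eta(\omega)+O(t^{3})$ to obtain $m\zeta(\omega/m)-1/(mt)-\eta(\omega)+O(t)$. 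On the right, the function $f_{m,1}(z)=\sigma(mz)/\sigma(z)^{m^{2}}$ has a simple zero at $z=\omega/m$ (because $\sigma(mz)$ vanishes while $\sigma(\omega/m)\neq 0$); writing $f_{m,1}(\omega/m+t)=at+bt^{2}+O(t^{3})$ with $a=f'_{m,1}(\omega/m)\neq 0$ and $b=f''_{m,1}(\omega/m)/2$ gives $f'_{m,1}/f_{m,1}=1/t+b/a+O(t)$, so that the right-hand side equals $-1/(mt)-b/(ma)+O(t)$. Matching the $t^{0}$ coefficients yields
\[
\zeta\!\left(\frac{\omega}{m}\right)-\frac{\eta(\omega)}{m}=-\frac{f''_{m,1}(\omega/m)}{2m^{2}\,f'_{m,1}(\omega/m)}.
\]
Since $f_{m,1}\in\Q(g_2,g_3)(\wp(z),\wp'(z))$, its first two derivatives lie in the same field (using $\wp''=6\wp^{2}-g_{2}/2$ and $\wp'''=12\wp\wp'$), and evaluating at $z=\omega/m$ produces values algebraic over $\Q(g_2,g_3)$ by the preceding step. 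The main obstacle is precisely this Laurent-coefficient bookkeeping: certifying that the zero of $f_{m,1}$ at $\omega/m$ is simple (so $f'_{m,1}(\omega/m)\neq 0$), tracking the $\eta(\omega)$ term correctly, and handling the $2$-torsion case $\wp'(\omega/m)=0$ without introducing spurious vanishings in the rational expression for $f'_{m,1}$ and $f''_{m,1}$.
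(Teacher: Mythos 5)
Your proof is correct. For parts 1 and 2 it is essentially the paper's argument: the degree-$m^2$ relation between $\wp(z)$ and $\wp(mz)$, specialized at $z=p/m$, plus the $(n,m)=(1,m)$ row of Table 2 for the $\zeta$ term; your coprimality of $P$ and $Q$ plays the same role as the paper's division of $A(X,Y)$ by the largest power of $X-\wp(p)$, namely ensuring the specialized polynomial is not identically zero. For part 3 you take a genuinely different route. The paper reduces to the $2$-torsion point $\omega'/2$, where $\omega'=\omega/2^a$ with $a$ maximal such that $\omega/2^a\in\Omega$: there $\zeta(\omega'/2)=\eta(\omega')/2$ exactly (oddness plus quasi-periodicity), so item 2 applied with $p=\omega'/2$ already converts $\zeta(p)$ into a quasi-period, and the multiplication-by-$2^a$ formulae of Table 2 finish the bookkeeping. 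You instead compare Laurent expansions of $m\zeta(z)-\zeta(mz)$ and of $-f'_{m,1}/(m f_{m,1})$ at $z=\omega/m$ and extract the closed formula $\zeta(\omega/m)-\eta(\omega)/m=-f''_{m,1}(\omega/m)/\bigl(2m^2 f'_{m,1}(\omega/m)\bigr)$. This is explicit and self-contained, and it sidesteps the paper's parity bookkeeping with $m'=m/2$; the paper's reduction, in exchange, needs no Laurent computation at all. The "obstacles" you flag at the end are already discharged or routine: simplicity of the zero of $f_{m,1}$ at $\omega/m$ follows, as you note, from $\sigma$ having simple zeros exactly on $\Omega$; and the algebraicity over $\Q(g_2,g_3)$ of $f'_{m,1}(\omega/m)$ and $f''_{m,1}(\omega/m)$ (including at $2$-torsion, where $\wp'(\omega/m)=0$) is cleanest not by tracking a particular rational expression but by observing that $f'_{m,1}$ and $f''_{m,1}$ lie in the function field $\Q(g_2,g_3)(\wp,\wp')$ and are regular at $\omega/m$, so their values lie in the residue field of the corresponding point of $\cE$, which is algebraic over $\Q\bigl(g_2,g_3,\wp(\omega/m),\wp'(\omega/m)\bigr)$ and hence over $\Q(g_2,g_3)$ by the first half of your part 3. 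The same specialization principle is implicitly used by the paper in its own treatment of item 2.
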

 
 \begin{proof}
 Item 1 follows from the proof of \cite[Th.~20.9 p.253]{M}. See also \cite[I.3]{F}. 
 According to \cite[Lemma 6.1]{M75} and \cite[Chap.~II]{LangECDA}, the degree of the extension $\Q(g_2,g_3,\wp(z),\wp'(z))$ over $\Q(g_2,g_3,\wp(mz),\wp'(mz))$ is $m^2$. 
 
 Item 2 deals with the division points. Since $m\Omega\subset \Omega$, the assumption $p\not\in\Omega$ implies $p/m\not\in\Omega$. From item 1 it follows that there is a nonzero polynomial $A\in\Q(g_2,g_3)[X,Y]$ such that the meromorphic function $A(\wp(z),\wp(z/m))$ is $0$. In the polynomial ring $\Q(g_2,g_3,\wp(p))[X,Y]$ we divide $A(X,Y)$ by $X-\wp(p)$. Let $\tilde{A}$ be the quotient:
 \[
 A(X,Y)=(X-\wp(p))^h \tilde{A}(X,Y),
 \]
 where $h\geqslant 0$ and the polynomial $\tilde{A}(\wp(p),Y)\in \Q(g_2,g_3,\wp(p))[Y]$ is not $0$. From $\tilde{A}(\wp(p),\wp(p/m))=0$ we deduce the result for $\wp(p/m)$. 
 
 The multiplication formula for $\zeta$ implies that for $p\in\C\smallsetminus\Omega$ and $m\geqslant 1$, the number $\zeta(p/m) -\zeta(p)/m$ belongs to $\Q(g_2,g_3,\wp(p/m), \wp'(p/m))$.
 
 Item 3 deals with the torsion points. Let $2^a$ with $a\geqslant 0$ be the highest power of $2$ such that $\omega':=\omega/2^a\in\Omega$. Using item 2 with $p=\omega'/2$, $m'=m/2$, so that $p/m'=\omega'/m$, we deduce that $\wp(p/m')$ and $\zeta(p/m')-\zeta(p)/m'$ are algebraic over the field $\Q(g_2,g_3)$.
 Since
\[
\frac p {m'}=\frac{\omega'} m=\frac \omega {2^a m},\quad\text{and}\quad
\zeta(p)/m'=\eta(\omega')/2m'=\eta(\omega)/2^a m,
 \] 
 the numbers
 \[
 \wp(\omega'/m)=\wp(\omega /2^a m)\quad\text{and}\quad
 \zeta(\omega'/m)-\eta(\omega')/m=\zeta(\omega /2^a m)-\eta(\omega)/2^am
 \]
 are algebraic over the field $\Q(g_2,g_3)$. The formulae of multiplication by $2^a$ (Table 2) complete our proof. 
 \null\hfill$\square$
\end{proof}

\begin{remark}\rm 
Several of our statements have the following shape:
\begin{quote}
\em
Let $p_1,\dots,p_n\in\C\smallsetminus\Omega$. For given complex numbers $z_1,\dots,z_m$ and a positive integer $\vartheta$, under suitable assumptions, the transcendence degree of the field 
\[
\Q\bigl(g_2,g_3,p_1,\dots,p_n,\wp(p_1),\dots,\wp(p_n),\zeta(p_1),\dots,\zeta(p_n),z_1,\dots,z_m\bigr)
\] 
is at least $\vartheta$.
\end{quote}

We point out that the assumption $p_i\not\in\Omega$ could be dropped if we were replacing this statement with the following equivalent one, where $\tilde{\cE}$ denotes the nontrivial extension of $\cE$ by $\G_a$, the exponential of which is (see \cite[Exercise 20.102]{M}):
\[
\begin{matrix}
 \exp_{\tilde{\cE}}:&\Lie(\tilde{\cE}) _{\C}= \C^2 &\longrightarrow& \tilde{\cE} (\C) \subset \PP^{4}(\C)
 \hfill
\\
 &(w,z) & \longmapsto & 
\sigma(z)^3 [\wp(z) : \wp'(z) :  1:
w+\zeta(z):(w+\zeta(z))\wp'(z)+2\wp(z)^2].
 \end{matrix}
\] 

\begin{quote}
\em
Let $p_1,\dots,p_n\in\C$. Let $K_0$ be a field of definition of $\tilde\cE$ containing the coordinates of $\exp_{\tilde\cE}(p_i)$ for $1\leqslant i\leqslant n$. Then for given complex numbers $z_1,\dots,z_m$ and a positive integer $\vartheta$, the transcendence degree (over $\Q$, as always) of the field $K_0\bigl(z_1,\dots,z_m\bigr)$ is at least $\vartheta$.
\end{quote}
 
This alternative formulation would avoid the need to consider separately the case where some $p_i$ belongs to $\Omega$.

For instance items 2 and 3 of Lemma \ref{Lemma:division} mean that for $(w,p)\in\C^2$, if $K$ is a field of definition of $\tilde\cE$ containing the coordinates of $\exp_{\tilde\cE}(w,p)$, then $\exp_{\tilde\cE}(w/m,p/m)$ for $m\geqslant 1$ is defined over the algebraic closure of $K$. In other terms for $p\not\in\Omega$ if $w+\zeta(p)$ and $\wp(p)$ are algebraic over a field $K$ containing $\Q(g_2,g_3)$, then $w/m+\zeta(p/m)$ and $\wp(p/m)$ also; while for $\omega\in\Omega$, if $w+\eta(\omega)$ is algebraic over a field $K$ containing $\Q(g_2,g_3)$, then $w/m+\zeta(\omega/m)$ also. 
 \end{remark}
 
 \subsection{Including the CM case} \label{S:CM}

Assume that the elliptic curve $\cE$ has complex multiplication. Let $\tau=\frac{\omega_2}{\omega_1}$ be the quotient of a pair of fundamental periods of $\wp$. Then $k$ is the imaginary quadratic extension $k=\Q(\tau)$ of $\Q$ and $\tau$ is a root of a polynomial 
\[ 
A+BX+CX^2 \in \Z[X],
\] 
where $A,B,C$ are relatively prime integers with $C>0$. Hence $C \tau \Omega \subseteq \Omega$.

 According to \cite[Chap. III, \S3.2, Lemma 3.1] {M75}\footnote{At the beginning of the book \cite{M75}, the author assumes the invariants $g_2,g_3$ algebraic but his Lemma 3.1 remains true even without this hypothesis. \cite[Appendix B, Th. 8]{BK} does not assume that $g_2$ and $g_3$ are algebraic.} 
and \cite[Appendix B, Th. 8]{BK}, there are two independent linear relations between the periods $\omega_1, \omega_2,\eta_1,\eta_2$, namely
\[
\omega_2-\tau\omega_1 =0
\]
and 
\begin{equation}\label{Equation:kappa}
A\eta_1- C\tau\eta_2-\kappa\omega_2=0
\end{equation}
where $\kappa$ is algebraic over the field $\Q(g_2,g_3)$.

By \cite[Lemma 6.2]{M75} and \cite[Proposition 5]{Reyssat82}, there exist two non constant polynomials $P(X),Q(X)$ in $ \Q(g_2,g_3,\tau)[X]$ such that the following three equa\-li\-ties hold: 

\medskip
\medskip

\begin{tabular}{ |p{15cm}| }
\hline
\multicolumn{1}{|c|}{} \\
 \multicolumn{1}{|c|}{\textbf{ Multiplication by $\tau=\frac{\omega_2}{\omega_1}$ formulae }} \\
\multicolumn{1}{|c|}{(only in the case of complex multiplication)} \\
\multicolumn{1}{|c|}{} \\
\hline \rule[-4mm]{0cm}{1cm}
$\wp (C\tau z) = \frac{P(\wp(z))}{Q(\wp(z))} \in \Q\big(g_2,g_3,\tau, \wp(z)\big) $ \\
\hline \rule[-4mm]{0cm}{1cm}
$C \tau\zeta(C\tau z) = AC \zeta(z)- \kappa C\tau z + \frac{\wp'(z) Q'(\wp(z))}{2Q(\wp(z))} $ \\ 
\hline \rule[-4mm]{0cm}{1cm}
$ \sigma(C\tau z)^2 = (C\tau)^2 \sigma(z)^{2AC} \; \rme^{-\kappa  C\tau z^2} \; Q(\wp(z))$ \\ 
\hline 
 \end{tabular}
 
 \begin{center}
{\bf Table 3}
 \end{center}

 \subsection{Auxiliary results} 

Here is the main result of this section. 

 \begin{proposition}\label{Proposition:MultiplicationFormula}
 Let $\alpha$ be a nonzero element of $k$. Write $\alpha=r_1+r_2\tau$ where $r_1$ and $r_2$ are two rational numbers, not both zero. Let $m\in\Z$ be the least positive integer such that $mr_1$ and $mr_2/C$ are integers. Then
\\
(1)
The function $\wp\bigl( \alpha z \bigr)$ belongs to $k(g_2,g_3,\wp(z/m))$. 
\\
(2) 
The function $\Xi_{r_1,r_2}$ defined by 
\[
\zeta\bigl( \alpha z \bigr)=\left( r_1+\frac A {C\tau} r_2\right)\zeta(z) - \frac {\kappa r_2} {C} z +\Xi_{r_1,r_2}(z)
\] 
belongs to $k(g_2,g_3,\wp(z/m),\wp'(z/m))$. 
 \end{proposition}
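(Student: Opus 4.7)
The strategy is to rewrite $\alpha z$ as a $\Z$-linear combination of $z/m$ and $C\tau\cdot(z/m)$, and then chain together the three tables: the addition formulae (Table 1), the integer-multiplication formulae (Table 2 with denominator $1$), and the CM multiplication-by-$C\tau$ formulae (Table 3).

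By the definition of $m$, the numbers $n_1:=mr_1$ and $n_2:=mr_2/C$ are integers. Setting $w:=z/m$, we have the decomposition
\[
\alpha z=r_1z+r_2\tau z=n_1 w+n_2(C\tau w).
\]
My first step is part (1). Apply the $\wp$-addition formula of Table 1 with $z_1=n_1w$ and $z_2=n_2(C\tau w)$: this reduces the claim to showing that each of the four numbers $\wp(n_iw)$, $\wp'(n_iw)$ (for the first summand) and $\wp(n_2\cdot C\tau w)$, $\wp'(n_2\cdot C\tau w)$ (for the second) lies in $k(g_2,g_3,\wp(w),\wp'(w))$. The first pair is handled directly by Table 2 with $m=1$. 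For the second pair, apply Table 3 to get $\wp(C\tau w)\in k(g_2,g_3,\wp(w))$ and differentiate this identity in $w$ to obtain $\wp'(C\tau w)\in k(g_2,g_3,\wp(w),\wp'(w))$; then feed these into Table 2 (again with denominator $1$, applied at the argument $C\tau w$) to reach the whole field for $\wp(n_2\cdot C\tau w)$ and $\wp'(n_2\cdot C\tau w)$. Thus $\wp(\alpha z)\in k(g_2,g_3,\wp(w),\wp'(w))$. To eliminate $\wp'(w)$ and land inside $k(g_2,g_3,\wp(w))$ as claimed, use the parity argument: $\wp(\alpha z)$ and $\wp(w)$ are even in $z$ while $\wp'(w)$ is odd, so writing the resulting rational expression and reducing modulo $\wp'(w)^2-4\wp(w)^3+g_2\wp(w)+g_3$ forces the odd-in-$\wp'(w)$ part to vanish.

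For part (2) I run through the analogous chain with the $\zeta$-addition formula of Table 1, keeping careful track of the two \emph{linear} terms (in $\zeta(w)$ and in $w$) that Tables 2 and 3 produce; everything else will fall into $k(g_2,g_3,\wp(w),\wp'(w))$ by part (1). From Table 2 (denominator $1$), $\zeta(n_1w)=n_1\zeta(w)+R_1(w)$ and $\zeta(n_2\cdot C\tau w)=n_2\zeta(C\tau w)+R_2(C\tau w)$, where $R_1,R_2$ are rational in $(g_2,g_3,\wp,\wp')$; then Table 3 yields
\[
\zeta(C\tau w)=\frac{A}{\tau}\zeta(w)-\kappa w+R_3(w),
\]
with $R_3\in k(g_2,g_3,\wp(w),\wp'(w))$. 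Collecting,
\[
\zeta(\alpha z)=\Bigl(n_1+\frac{An_2}{\tau}\Bigr)\zeta(w)-n_2\kappa w+(\text{stuff in }k(g_2,g_3,\wp(w),\wp'(w))).
\]
Finally, Table 2 with denominator $m$ applied to $\zeta(z)=\zeta(mw)=m\zeta(w)+(\text{stuff})$ lets me replace $\zeta(w)$ by $\zeta(z)/m$ at the cost of another term in $k(g_2,g_3,\wp(w),\wp'(w))$. A direct bookkeeping check then identifies the coefficient of $\zeta(z)$ as $\tfrac{n_1}{m}+\tfrac{An_2}{m\tau}=r_1+\tfrac{A}{C\tau}r_2$ and the coefficient of $z=mw$ as $-\tfrac{n_2\kappa}{m}=-\tfrac{\kappa r_2}{C}$, exactly the prescribed linear part. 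What remains is $\Xi_{r_1,r_2}(z)$, lying in $k(g_2,g_3,\wp(z/m),\wp'(z/m))$ as claimed.

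The only non-mechanical point is the coefficient matching at the end of part (2): the various factors of $m$, $C$ and $\tau$ introduced by Tables 2 and 3 have to recombine to give precisely $r_1+\tfrac{A}{C\tau}r_2$ in front of $\zeta(z)$ and $-\tfrac{\kappa r_2}{C}$ in front of $z$. This is the main obstacle; everything else is an essentially formal chase through the three tables, together with the parity observation used to sharpen part (1) from $k(g_2,g_3,\wp(z/m),\wp'(z/m))$ to $k(g_2,g_3,\wp(z/m))$.
\null\hfill$\square$
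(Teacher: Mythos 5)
Your proposal is correct and follows essentially the same route as the paper: decompose $\alpha z$ via $\alpha=r_1+r_2\tau$ and chain the addition formulae (Table 1), the rational-multiplication formulae (Table 2) and the multiplication-by-$C\tau$ formulae (Table 3), with the coefficient bookkeeping for $\zeta$ coming out exactly as you compute. The only cosmetic difference is in part (1), where the paper shortcuts your addition-formula-plus-parity computation by observing directly that $\wp(\alpha z)$ is an even elliptic function for the lattice $m\Omega$, hence a rational function of $\wp(z/m)$.
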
 

\begin{proof}
Write $r_1=n_1/m$ and $r_2=Cn_2/m$, so that $n_1$ and $n_2$ are integers and 
\[
m\alpha=n_1+n_2 C\tau\in \End(\cE)\smallsetminus\{0\}.
 \] 
Item (1) follows from the fact that $\wp(\alpha z)$ is an even elliptic function for the lattice $m\Omega$ (compare with \cite[Lemma 6.3]{M75}).

Consider item (2). 
In case $r_2=0$ and $r_1\not=0$ we apply the second row of Table 2: 
\[
\Xi_{r_1,0}(z)=\frac{f'_{n_1,m}(z)}{mn_1 f_{n_1,m}(z)}\cdotp
 \]
As pointed out above \eqref{equation:zeta}, the right hand side depends only on $n_1/m$:
 \[
\zeta(r_1z)=r_1\zeta(z)+\Xi_{r_1,0}(z).
 \]
In case $r_1=0$ and $r_2=C$, we have $n_2=m=1$ and we apply the second row of Table 3: 
\[
\Xi_{0,C}(z)=\frac{\wp'(z) Q'(\wp(z))}{2C\tau Q(\wp(z))} \cdotp
 \]
Using these two cases, we deduce the result when $r_1=0$ and $r_2\not=0$ with
\[
\Xi_{0,r_2}(z)= \frac{r_2} C \Xi_{0,C}(z)+\Xi_{r_2/C,0}(C\tau z).
\] 
Finally when $r_1r_2\not=0$ we apply the second row of Table 1: 
\[
\Xi_{r_1,r_2}=\Xi_{r_1,0}(z)+\Xi_{0,r_2}(z)+
\frac{1}{2} \frac{\wp'(r_1z)-\wp'(r_2\tau z)}{\wp(r_1z)-\wp(r_2\tau z)}\cdotp
 \]
\null\hfill$\square$
\end{proof}

We will also need the following two consequences of Proposition \ref{Proposition:MultiplicationFormula}:

\begin{corollary}\label{CorollaryMultiplicationFormula}
Let $\alpha\in k$, $\alpha\not=0$. Then there exist two elements $\beta_1$ and $\beta_2$ in $k$ such that the function 
\begin{equation}\label{Equation:Xialpha}
\Xi_\alpha(z)=\zeta(\alpha z)-\beta_1\zeta(z)-\beta_2z
\end{equation}
is algebraic over the field $\Q(g_2,g_3,\wp(z))$. Moreover this function $\Xi_\alpha(z)$ is constant if and only if either $\alpha=\pm 1$, or $\alpha^4=1$ and $g_3=0$, or $\alpha^6=1$ and $g_2=0$, in which cases it is $0$. 
\end{corollary}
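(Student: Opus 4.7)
The plan is to deduce existence from Proposition \ref{Proposition:MultiplicationFormula}(2) directly, and then to analyse the constancy condition by differentiating and using the differential equation satisfied by $\wp$.

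Writing $\alpha=r_1+r_2\tau$ with $r_1,r_2\in\Q$ not both zero, and choosing $m$ as in Proposition \ref{Proposition:MultiplicationFormula}, that proposition gives
\[
\zeta(\alpha z) = \left(r_1+\frac{Ar_2}{C\tau}\right)\zeta(z) - \frac{\kappa r_2}{C}\,z + \Xi_{r_1,r_2}(z),
\]
with $\Xi_{r_1,r_2}(z)\in k(g_2,g_3,\wp(z/m),\wp'(z/m))$. I set $\beta_1=r_1+Ar_2/(C\tau)$ and $\beta_2=-\kappa r_2/C$, so that $\Xi_\alpha(z)=\Xi_{r_1,r_2}(z)$. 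By Lemma \ref{Lemma:division}(1), $\wp(z/m)$ is algebraic over $\Q(g_2,g_3,\wp(z))$; since $\wp'(z/m)$ is algebraic over $\Q(g_2,g_3,\wp(z/m))$ via the differential equation $\wp'^2=4\wp^3-g_2\wp-g_3$, the function $\Xi_\alpha(z)$ is algebraic over $\Q(g_2,g_3,\wp(z))$. Clearly $\beta_1\in k=\Q(\tau)$; for $\beta_2\in k$, a finer input is required in the CM case (see the obstacle noted below).

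For the constancy, I differentiate $\Xi_\alpha$ using $\zeta'=-\wp$:
\[
\Xi_\alpha'(z) = -\alpha\,\wp(\alpha z) + \beta_1\,\wp(z) - \beta_2.
\]
If $\Xi_\alpha$ is constant, then $\wp(\alpha z)=(\beta_1/\alpha)\wp(z)-\beta_2/\alpha$, and one further differentiation gives $\wp'(\alpha z)=(\beta_1/\alpha^2)\wp'(z)$. Substituting these into $\wp'^2=4\wp^3-g_2\wp-g_3$ and matching coefficients of $\wp(z)^i$ for $i=3,2,1,0$ yields
\[
\beta_1=1/\alpha,\qquad \beta_2=0,\qquad g_2(\alpha^4-1)=0,\qquad g_3(\alpha^6-1)=0.
\]
Since the curve is non-singular ($g_2^3\neq 27g_3^2$), $g_2$ and $g_3$ are not simultaneously zero, so either $\alpha=\pm 1$, or $\alpha^4=1$ with $g_3=0$, or $\alpha^6=1$ with $g_2=0$. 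Conversely, in each of these cases Remark \ref{Remark:specialcases} (for the two CM subcases) or the oddness of $\zeta$ (for $\alpha=\pm 1$) gives $\zeta(\alpha z)=\alpha^{-1}\zeta(z)$, so $\Xi_\alpha\equiv 0$.

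The main obstacle I anticipate is showing $\beta_2\in k$ in the CM case with $r_2\neq 0$: Proposition \ref{Proposition:MultiplicationFormula}(2) places $\beta_2=-\kappa r_2/C$ only in the algebraic closure of $\Q(g_2,g_3)$ via \eqref{Equation:kappa}, so a specific argument is needed (for instance, direct computation in the small concrete examples such as $\tau=\rmi$ shows $\kappa=0$, suggesting that CM symmetry forces $\kappa$ into $k$). Everything else --- the algebraicity of $\Xi_\alpha$ over $\Q(g_2,g_3,\wp(z))$ and the characterization of when it is constant (with constant value $0$) --- is a routine consequence of Proposition \ref{Proposition:MultiplicationFormula}, Lemma \ref{Lemma:division}, and the differential equation of $\wp$.
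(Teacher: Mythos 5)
Your treatment of the first part coincides with the paper's, which disposes of it with the single word ``immediately'' from Proposition \ref{Proposition:MultiplicationFormula} (you usefully add that Lemma \ref{Lemma:division}.1 together with the differential equation of $\wp$ brings $\Xi_\alpha$ down from $k(g_2,g_3,\wp(z/m),\wp'(z/m))$ to a function algebraic over $\Q(g_2,g_3,\wp(z))$). For the constancy criterion you take a genuinely different route: the paper observes that if $\Xi_\alpha$ is constant then $\zeta(\alpha z)-\beta_1\zeta(z)$ has no poles, forces $\beta_1\neq0$ and $\alpha\Omega=\Omega$, concludes that $\alpha$ is a root of unity of $k$, and reads off the three cases from the units of $\Q$, $\Q(\rmi)$ and $\Q(\zeta_6)$; you instead differentiate twice and match coefficients in $\wp'^2=4\wp^3-g_2\wp-g_3$. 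Both are valid; yours is more computational but also produces $\beta_1=1/\alpha$ and $\beta_2=0$ directly. One micro-gap: the $\wp^3$-coefficient equation $\beta_1^2/\alpha^4=\beta_1^3/\alpha^3$ also admits $\beta_1=0$, which you must discard by noting that it would make $\wp(\alpha z)$ constant.

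Your ``obstacle'' about $\beta_2\in k$ is well spotted and is not addressed by the paper's proof either. The pair $(\beta_1,\beta_2)$ is in fact unique --- no nonzero function $a\zeta(z)+bz$ is algebraic over $\C(\wp(z))$, since its translates by $\Omega$ would produce infinitely many roots of a fixed polynomial over $\C(\wp(z))$, and $a\eta(\omega)+b\omega$ cannot vanish for all $\omega\in\Omega$ unless $a=b=0$ by Legendre's relation --- so $\beta_2=-\kappa r_2/C$ is forced. Expanding the second formula of Table 3 at the origin shows $\kappa\in\Q(g_2,g_3,\tau)$, so $\beta_2$ is algebraic over $\Q(g_2,g_3)$; but $\kappa$ scales like $\lambda^{-2}$ under $\Omega\mapsto\lambda\Omega$ (it is essentially the $s_2$-invariant of the lattice times $\tau-\overline{\tau}$), so for a general CM lattice with transcendental invariants and $r_2\neq0$ it is transcendental and hence not in $k$. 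The literal claim ``$\beta_2\in k$'' should therefore be read as ``$\beta_2\in k(g_2,g_3)$'', i.e.\ algebraic over $\Q(g_2,g_3)$, which is all that is used in the proof of Proposition \ref{G=>W} (where $g_2,g_3$ are assumed algebraic). This is an imprecision in the statement rather than something your argument could have repaired, and your honest flagging of it is preferable to the paper's silence.
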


\begin{proof}
The first part of this statement follows immediately from Proposition \ref{Proposition:MultiplicationFormula}. It remains to elucidate the cases where the function $\Xi_\alpha(z)$ is constant. If either $\alpha=\pm 1$, or $\alpha^4=1$ and $g_3=0$, or $\alpha^6=1$ and $g_2=0$, then from Remark \ref{Remark:specialcases} we deduce $\beta_2=0$ and $\Xi_\alpha(z)=0$. Conversely, if $\Xi_\alpha(z)$ is constant, then the function $\zeta(\alpha z)-\beta_1\zeta(z)$ has no pole, hence $\beta_1\not=0$ and $\alpha\Omega=\Omega$. In this case $\alpha$ is an automorphism of the elliptic curve $\cE$, hence a root of unity in the number field $k$.
\hfill
\null\hfill$\square$
\end{proof}

\begin{corollary}\label{Corollary:linearrelations}
Let $\omega\in\Omega$, let $p_1,\dots,p_{n+1}$  in $\C\smallsetminus\Omega$  and let $b_0,b_1,\dots,b_n$   in $k$. Assume 
 \[
p_{n+1}=b_0\omega +b_1p_1+\cdots+b_n p_n.
\]
Then $\wp(p_{n+1})$ and $\zeta(p_{n+1})$ are algebraic over the field 
\[
\Q(g_2,g_3,\omega_1,\omega_2,\eta_1,\eta_2,p_1,\dots,p_n,\wp(p_1),\dots,\wp(p_n),\zeta(p_1),\dots,\zeta(p_n)).
\]
\end{corollary}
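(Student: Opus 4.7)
The plan is to reduce the statement to three ingredients already at our disposal: the scalar-multiplication formulae of Proposition \ref{Proposition:MultiplicationFormula} and Corollary \ref{CorollaryMultiplicationFormula}, the torsion formulae of Lemma \ref{Lemma:division}(3), and the addition formulae of Table 1. Write $L$ for the field appearing in the statement, note that $\tau=\omega_2/\omega_1\in L$ so $k\subseteq L$, and decompose $p_{n+1}=y_0+y_1+\cdots+y_n$ with $y_0:=b_0\omega$ and $y_i:=b_ip_i$ for $1\leqslant i\leqslant n$.

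First I would treat each scalar multiple $y_i$ with $i\geqslant 1$, $b_i\neq 0$ and $y_i\notin\Omega$. Proposition \ref{Proposition:MultiplicationFormula}(1) places $\wp(y_i)$ in $k(g_2,g_3,\wp(p_i/m_i))$ for a suitable positive integer $m_i$, and Lemma \ref{Lemma:division}(2) shows that $\wp(p_i/m_i)$, hence $\wp(y_i)$, is algebraic over $\Q(g_2,g_3,\wp(p_i))\subseteq L$; the Weierstrass equation $(\wp')^2=4\wp^3-g_2\wp-g_3$ then yields $\wp'(y_i)$ algebraic over $L$, and Corollary \ref{CorollaryMultiplicationFormula} provides $\beta_{1,i},\beta_{2,i}\in k$ such that $\zeta(y_i)-\beta_{1,i}\zeta(p_i)-\beta_{2,i}p_i$ is algebraic over $\Q(g_2,g_3,\wp(p_i))\subseteq L$, whence $\zeta(y_i)$ is algebraic over $L$. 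For the period term $y_0$, assuming $y_0\notin\Omega$, I observe that $y_0$ is a torsion point of $\cE$: choose a positive integer $N$ with $Ny_0\in\Omega$ (take $N$ to be the denominator of $b_0$ in the non-CM case, or $N=CN'$ when $b_0=(m_1+m_2\tau)/N'$ in the CM case, using $C\tau\Omega\subseteq\Omega$); then Lemma \ref{Lemma:division}(3) applied to $\omega^\star:=Ny_0\in\Omega$ shows that $\wp(y_0)$ and $\zeta(y_0)-\eta(\omega^\star)/N$ are algebraic over $\Q(g_2,g_3)$, and $\eta(\omega^\star)\in\Z\eta_1+\Z\eta_2\subseteq L$ settles $\zeta(y_0)$. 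Finally, iterating the first two rows of Table 1 expresses $\wp(y_0+\cdots+y_n)$ and $\zeta(y_0+\cdots+y_n)-\sum_i\zeta(y_i)$ as rational functions over $\Q(g_2,g_3)$ of the data $\wp(y_i),\wp'(y_i)$, which together with the preceding steps yields the desired algebraicity of $\wp(p_{n+1})$ and $\zeta(p_{n+1})$ over $L$.

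The main obstacle I foresee lies in the handling of \emph{degenerate configurations}: some $y_i$ may itself lie in $\Omega$, or an intermediate partial sum $y_0+\cdots+y_k$ may accidentally coincide with a period, causing the denominator $\wp(z_1)-\wp(z_2)$ in the addition formula to vanish. None of these exceptions introduces quantities outside the algebraic closure of $L$: any summand in $\Omega$ can be suppressed while absorbing its $\eta$-value (which lies in $\Z\eta_1+\Z\eta_2\subseteq L$) into the running $\zeta$ total, and a coincidence $\wp(z_1)=\wp(z_2)$ at an intermediate stage can be resolved by a suitable reordering of the $y_i$'s together with the duplication formulae for $\wp$ and $\zeta$. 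The bookkeeping of these corrections, rather than any new transcendence input, is where the technical care of the proof lies.
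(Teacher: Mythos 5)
Your proposal is correct and follows essentially the same route as the paper, whose proof is a one-line reduction: apply the addition formulae of Table 1 by induction, together with Lemma \ref{Lemma:division} (for the torsion term $b_0\omega$ and the division points) and Proposition \ref{Proposition:MultiplicationFormula} (for the scalar multiples $b_ip_i$). You simply spell out the details, including the degenerate configurations, which the paper leaves implicit.
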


\begin{proof}
Using the addition formulae (Table 1), one deduces Corollary \ref{Corollary:linearrelations}  by induction from Lemma \ref{Lemma:division} and Proposition \ref{Proposition:MultiplicationFormula}.
\null\hfill$\square$
\end{proof}

\section{Geometric origin of the Split Semi--Elliptic Conjecture \ref{Conjecture:SplitSemiElliptic}}\label{S:Elliptico--toric}

In \cite[\S 1]{B02} the first author states the following conjecture, which she proves to be equivalent to the Grothendieck-Andr\'{e} generalized period Conjecture applied to the 1-motive 
\[
M=[u:\Z \rightarrow \G_m^s \times \cE^n ], \; u(1)=(\rme^{t_1}, \dots, \rme^{t_s} , P_1, \dots, P_n ) \in (\G_m^s\times \cE^n )(\C),
\] 
with $ P_i=[\wp(p_i): \wp'(p_i):1] $ for $i=1, \dots,n:$

 \begin{conjecture} [Elliptico--Toric Conjecture] 
\label{Conjecture:elliptico--toric}
Let $s\geqslant 0$ and $n\geqslant 0$ be two integers,
$\rme^{t_1}, \dots, \rme^{t_s}$ be points in $\G_m(\C)$ and $P_1, \dots, P_n$ be points in $\cE(\C)$. Let $K$ be the field defined by \eqref{Equation:K}. Then
\begin{equation}\label{Equation:trandeg} \mathrm{tran.deg} \; K ( \omega_1, \omega_2,\eta_1,\eta_2) \geqslant \dim_\Q <t_\ell>_\ell+ \frac{4}{[k:\Q]} +2 \dim_{k}<p_i>_i, 
 \end{equation}
 where$<t_\ell>_\ell$ is the sub $\Q $--vector space of $\C / 2\pi\rmi \Q$ generated by the classes of $t_1, \dots, t_s $ modulo $ 2\pi\rmi \Q$ 
 and $<p_i>_i$ is the sub $k$-vector space of $\C/(\Omega\otimes_\Z\Q)$ generated by the classes of $p_1, \dots, p_n$ modulo $\Omega\otimes_\Z\Q$. \\
\end{conjecture}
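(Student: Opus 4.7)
My plan is to establish the equivalence between the Elliptico--Toric Conjecture \ref{Conjecture:elliptico--toric} just stated and the Split Semi--Elliptic Conjecture \ref{Conjecture:SplitSemiElliptic}, which is the theorem announced earlier in the paper. After reducing (by invariance of both sides under replacing the $t_i$ and $p_j$ by bases of their classes modulo $2\pi\rmi\Q$ and $\Omega\otimes_\Z\Q$) to the case where $t_1,\ldots,t_s$ are $\Q$--linearly independent and $p_1,\ldots,p_n$ are $k$--linearly independent in $\C\smallsetminus\Omega$, I would introduce
\[
\epsilon_t := \begin{cases} 1 & \text{if } 2\pi\rmi\in \Q t_1+\cdots+\Q t_s, \\ 0 & \text{otherwise,}\end{cases}
\qquad
d_p := \dim_k\bigl((kp_1+\cdots+kp_n)\cap(\Omega\otimes_\Z\Q)\bigr),
\]
so that $\dim_\Q\langle t_\ell\rangle_\ell = s-\epsilon_t$ and $\dim_k\langle p_i\rangle_i = n-d_p$. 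Writing $F := K(\omega_1,\omega_2,\eta_1,\eta_2)$ and $\Delta := \mathrm{tran.deg}(F/K)$, the strategy is to compare $\mathrm{tran.deg}\,F = \mathrm{tran.deg}\,K + \Delta$ with the right-hand side of \eqref{Equation:trandeg}, and show that the quantity $(\text{ET bound}) - (\text{Split bound})$ matches $\Delta$ in both directions.

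For (Split $\Rightarrow$ ET), I would apply Conjecture \ref{Conjecture:SplitSemiElliptic} to an \emph{enlarged} configuration engineered to capture the periods inside the field. Adjoin $t_{s+1}=2\pi\rmi$ when $\epsilon_t=0$, and pick a $k$--basis $\omega^{(1)},\ldots,\omega^{(m)}$ of $\Omega\otimes_\Z k$ modulo its intersection with $kp_1+\cdots+kp_n$ (so $m=2/[k:\Q]-d_p$), with $\omega^{(j)}\in\Omega$ primitive and an integer $N\geq 2$ for which $\omega^{(j)}/N\notin\Omega$; then adjoin $p_{n+j}:=\omega^{(j)}/N$. By Lemma \ref{Lemma:division}, $\wp(\omega^{(j)}/N)$ and $\zeta(\omega^{(j)}/N)-\eta(\omega^{(j)})/N$ are algebraic over $\Q(g_2,g_3)$, so the field $K'$ generated by the enlarged data has the same algebraic closure as $F$. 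By construction the enlarged configuration is exceptional ($2\pi\rmi$ lies in the $\Q$--span of the new $t$'s and $\Omega$ in the $k$--span of the new $p$'s), so Conjecture \ref{Conjecture:SplitSemiElliptic} gives
\[
\mathrm{tran.deg}\,F = \mathrm{tran.deg}\,K' \geq (s+1-\epsilon_t) + 2\bigl(n+2/[k:\Q]-d_p\bigr) - 1 = s+2n+4/[k:\Q]-\epsilon_t-2d_p,
\]
which is exactly the right-hand side of \eqref{Equation:trandeg}.

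For the converse (ET $\Rightarrow$ Split), I would upper-bound $\Delta$ by counting algebraic relations among $\omega_1,\omega_2,\eta_1,\eta_2$ over $K$. Legendre's relation \eqref{Equation:Legendre} together with (in the CM case) $\omega_2=\tau\omega_1$ and \eqref{Equation:kappa} already force $\mathrm{tran.deg}_\Q(\omega_1,\omega_2,\eta_1,\eta_2)\leq 4/[k:\Q]$. When $\epsilon_t=1$, the fact that $2\pi\rmi\in K$ turns Legendre into a $K$--algebraic relation, shaving off one further dimension; when $d_p\geq 1$, each unit of $d_p$ puts one $\omega\in\Omega$ into $\overline K$, and the addition and multiplication formulae (Table 1 and Proposition \ref{Proposition:MultiplicationFormula}) applied to $\eta(\omega)=\zeta(p_1+\omega)-\zeta(p_1)$ put the corresponding $\eta$ into $\overline K$ as well, contributing $2d_p$ further relations. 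A short case check against the admissible values of $(\epsilon_t,d_p)$ then yields the uniform bound $\Delta \leq (\text{ET bound})-(\text{Split bound})$, so that $\mathrm{tran.deg}\,K = \mathrm{tran.deg}\,F-\Delta \geq (\text{ET bound})-\Delta \geq \text{Split bound}$.

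The main obstacle is the exceptional case $\epsilon_t=1$ and $d_p=2/[k:\Q]$: there all four periods become algebraic over $K$ and Legendre becomes a \emph{consequence} of the algebraicity of the individual periods rather than an independent relation, so the naive relation count overshoots, yet one must verify that $\Delta=0$ is still forced. This redundancy is precisely what accounts for the $-1$ correction in the exceptional lower bound of Conjecture \ref{Conjecture:SplitSemiElliptic} and makes the ET bound and the Split bound agree there.
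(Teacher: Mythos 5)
Your proposal establishes the equivalence with the Split Semi--Elliptic Conjecture (Theorem \ref{G<=>SplitSchanuel}), which is precisely what the paper does with this statement, and by essentially the same route: reduction to bases of the classes via Lemma \ref{ChoiceBase1}, then for one direction enlarging the data by $2\pi\rmi$ and division points of periods so as to land in the exceptional case of Conjecture \ref{Conjecture:SplitSemiElliptic}, and for the converse applying the bound to a normalized tuple and removing the periods via \eqref{Equation:remarque}. Your uniform parametrization by $(\epsilon_t,d_p)$ is just a repackaging of the paper's case analysis $(T/T^\star)\times(P/\tilde P/P^\star)$.
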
 

Notice  that if the numbers $\rme^{t_\ell}, g_2,g_3,\wp(p_i)$ for $\ell=1,\dots,s$ and $i=1, \dots,n$ are all algebraic,  then, by Lemma \ref{ChoiceBase1} below,
\[
\mathrm{tran.deg} \; K ( \omega_1, \omega_2,\eta_1,\eta_2) \leqslant \dim_\Q <t_\ell>_\ell+ \frac{4}{[k:\Q]} +2 \dim_{k}<p_i>_i.
\]

\goodbreak
\medskip

\begin{remark}\label{Remarque:GGPCEllipticCase}\rm
 \par\noindent
 \begin{enumerate}
\item 
Legendre relation \eqref{Equation:Legendre} is a polynomial relation between the periods $\omega_1, \omega_2,\eta_1,\eta_2$ of the elliptic curve $\cE$ and the period $2\pi\rmi$ of the multiplicative group $\G_m$ (see for instance \cite[Example 5.1]{BP}):
\[
 \Q (2\pi\rmi, \omega_1, \omega_2,\eta_1,\eta_2) = \Q (\omega_1, \omega_2,\eta_1,\eta_2).
 \]

\item Assume the elliptic curve $\cE$ has complex multiplication, i.e. $k=\Q(\tau)$. 
According to \eqref{Equation:kappa}, the numbers $\omega_2$ and $\eta_2$ are algebraic over the field $\Q (g_2,g_3,\omega_1,\eta_1)$. 
 
\item For $s=n=0$, Conjecture \ref{Conjecture:elliptico--toric} applied only to the elliptic curve $\cE$ reads
\begin{equation*}
\mathrm{tran.deg} \, \Q (g_2,g_3,\omega_1, \omega_2,\eta_1,\eta_2) \geqslant \frac{4}{[k:\Q]} =
\begin{cases}
 2, \mathrm{ if}\; \cE \;\mathrm{has\; complex\; multiplication}\\
 4, \mathrm{ otherwise}.
\end{cases} 
\end{equation*}
If $g_2$ and $g_3$ are algebraic, then unconditionally we have, by \eqref{Equation:kappa},   
\[
\mathrm{tran.deg} \, \Q (\omega_1, \omega_2,\eta_1,\eta_2) \leqslant \frac{4}{[k:\Q]}\cdotp
 \]
In the CM case, according to Chudnovsky's Theorem \cite[Chap. 7]{Ch}, 
\[
 \mathrm{tran.deg} \, \Q (\omega_1, \omega_2,\eta_1,\eta_2) =2.
\]
 
\item For $n=0$, Conjecture \ref{Conjecture:elliptico--toric} is the following statement, which implies Schanuel's Conjecture \ref{SchanuelConjecture}:
\begin{quote}
{\em Let $t_1,\dots,t_s$ be  complex numbers such that $2\pi\rmi,t_1,\dots,t_s$  are $\Q$--linearly independent. Let $\cE$ be an elliptic curve with algebraic invariants $g_2,g_3$ and field of endomorphisms $k$. 
Then at least $s+\frac 4 {[k:\Q]}$ of the $2s+4$ numbers 
\[
t_1,\dots,t_s, \rme^{t_1},\dots, \rme^{t_s}, \omega_1,\omega_2,\eta_1,\eta_2
\] 
are algebraically independent.}
\end{quote}
 \end{enumerate}
\end{remark}

The main goal of the present section is to prove: 

\begin{theorem}\label{G<=>SplitSchanuel}
 The Split Semi--Elliptic Conjecture \ref{Conjecture:SplitSemiElliptic} is equivalent to the Elliptico--Toric Conjecture \ref{Conjecture:elliptico--toric}. 
\end{theorem}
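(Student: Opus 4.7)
The overall plan is to compare the field $K$ from Conjecture~\ref{Conjecture:SplitSemiElliptic} with its enlargement $K(\omega_1,\omega_2,\eta_1,\eta_2)$ from Conjecture~\ref{Conjecture:elliptico--toric}, and to show, case by case, that the discrepancy between the two conjectural lower bounds matches exactly an upper bound for $\mathrm{tran.deg}\,K(\omega_1,\omega_2,\eta_1,\eta_2)/K$. The key auxiliary input I will establish first is the following: \emph{if $\omega\in\Omega\smallsetminus\{0\}$ admits a representation $\omega=\sum_{i=1}^n b_i p_i$ with $b_i\in k$ and $p_i\in\C\smallsetminus\Omega$, then both $\omega$ and $\eta(\omega)$ are algebraic over the field $L:=\Q(g_2,g_3,p_1,\dots,p_n,\wp(p_1),\dots,\wp(p_n),\zeta(p_1),\dots,\zeta(p_n))$.} The claim on $\omega$ is immediate since $k\subseteq\overline{\Q}$. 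For $\eta(\omega)$, writing $\omega=m\omega'$ with $\omega'$ a primitive period so that $\omega'/2\notin\Omega$, one has $\eta(\omega)=2m\,\zeta(\omega'/2)$; expanding $\omega'/2$ as a $k$-combination of the $p_i$'s and applying Proposition~\ref{Proposition:MultiplicationFormula} to each summand $\zeta((b_i/(2m))p_i)$, the addition formula of Table~1 to assemble $\zeta$ of the sum, and Lemma~\ref{Lemma:division} to bring the divisions-by-$m$ in the $\wp,\wp'$ arguments back into $\Q(g_2,g_3,\wp(p_i))^{\mathrm{alg}}$, one finds that $\zeta(\omega'/2)$ and hence $\eta(\omega)$ lie in the algebraic closure of $L$ (one takes $m$ generic so that all intermediate partial sums stay off the lattice).

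For the direction \textbf{ET $\Rightarrow$ SSE}, I will apply Conjecture~\ref{Conjecture:elliptico--toric} to the very same data as in Conjecture~\ref{Conjecture:SplitSemiElliptic}, and bound $\mathrm{tran.deg}\,K(\omega_1,\omega_2,\eta_1,\eta_2)/K$ from above by combining four ingredients: Remark~\ref{Remarque:GGPCEllipticCase} yielding the unconditional bound $4/[k:\Q]$; Legendre's relation~\eqref{Equation:Legendre}, subtracting $1$ as soon as $2\pi\rmi\in\sum_\ell\Q t_\ell$ (so that $2\pi\rmi\in K$); the CM relation~\eqref{Equation:kappa} in the complex multiplication case; and the auxiliary lemma above, which forces $\omega_j$ and $\eta_j$ to be algebraic over $K$ whenever $\omega_j\in\sum_i kp_i$. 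A book-keeping across the possibilities for $\Omega\cap\sum_i kp_i$ (trivial, full, or of intermediate $\Q$-dimension $1$ in the non-CM case), crossed with the two possibilities for the position of $2\pi\rmi$ and with the CM/non-CM dichotomy, shows that the excess of $\dim_\Q\langle t_\ell\rangle+4/[k:\Q]+2\dim_k\langle p_i\rangle$ over $s+2n$ (or over $s+2n-1$ in the exceptional case) is cancelled exactly by the upper bound on the period extension, delivering the SSE inequality.

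For the reverse direction \textbf{SSE $\Rightarrow$ ET}, I will augment the ET input into an \emph{exceptional} SSE input. Setting $d_1=\dim_\Q\langle t_\ell\rangle$ and $d_2=\dim_k\langle p_i\rangle$, I pick a $\Q$-basis $(\tilde t_1,\dots,\tilde t_{d_1},2\pi\rmi)$ of $\sum_\ell\Q t_\ell+\Q\cdot 2\pi\rmi$ and, in the non-CM case, a $k$-basis $(\tilde p_1,\dots,\tilde p_{d_2},\omega_1/2,\omega_2/2)$ of $\sum_i kp_i+\Omega\otimes_\Z\Q$ (using only $\omega_1/2$ in the CM case, since $\omega_2=\tau\omega_1$); the half-periods avoid $\Omega$ because $(\omega_1,\omega_2)$ is a pair of fundamental periods. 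This augmented datum is $\Q$-linearly (resp.\ $k$-linearly) independent and lies in the exceptional case of Conjecture~\ref{Conjecture:SplitSemiElliptic}, so SSE yields
\[
\mathrm{tran.deg}\,\tilde K\geqslant (d_1+1)+2\bigl(d_2+2/[k:\Q]\bigr)-1=d_1+2d_2+4/[k:\Q].
\]
I then verify that $\tilde K$ is contained in the algebraic closure of $K(\omega_1,\omega_2,\eta_1,\eta_2)$, using: $2\pi\rmi=\omega_2\eta_1-\omega_1\eta_2$ from Legendre; $e^{\tilde t_i}$ algebraic over $\Q(e^{t_1},\dots,e^{t_s})$ as a rational power of the $e^{t_\ell}$'s times a root of unity; $\wp(\omega_j/2)=e_j$ algebraic over $\Q(g_2,g_3)$ together with $\zeta(\omega_j/2)=\eta_j/2$; and, for the remaining $\tilde p_i$'s, Proposition~\ref{Proposition:MultiplicationFormula}, Lemma~\ref{Lemma:division}, and Table~1 once more. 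This gives $\mathrm{tran.deg}\,K(\omega_1,\omega_2,\eta_1,\eta_2)\geqslant d_1+2d_2+4/[k:\Q]$, which is the ET bound.

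The main obstacle will be the case-by-case bookkeeping in the direction ET $\Rightarrow$ SSE, specifically the mixed non-CM subcases in which exactly one of $\omega_1,\omega_2$ lies in $\sum_i kp_i$: there the auxiliary lemma absorbs one period--quasi-period pair, while Legendre's relation combined with $2\pi\rmi\in K$ must account for the other, and the count of $s+2n$ must come out on the nose, neither more nor less, in each of the sub-possibilities.
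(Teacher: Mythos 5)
Your proposal is correct and follows essentially the same route as the paper: you prove SSE $\Rightarrow$ ET by augmenting the data with $2\pi\rmi$ and the half-periods so as to land in the exceptional case of Conjecture \ref{Conjecture:SplitSemiElliptic}, and the converse by a case analysis on the position of $2\pi\rmi$ in $\Q t_1+\cdots+\Q t_s$ and of $\Omega$ in $kp_1+\cdots+kp_n$, discarding the periods by means of Legendre's relation \eqref{Equation:Legendre} and the CM relation \eqref{Equation:kappa}, exactly as in the paper's six cases. The auxiliary lemma you isolate (algebraicity of $\eta(\omega)$ over $L$ when $\omega\in kp_1+\cdots+kp_n$) is the same content the paper packages into its ``no loss of generality'' normalizations $p_1=\omega_1/2$, etc., justified there by Lemma \ref{ChoiceBase1} and Corollary \ref{Corollary:linearrelations}.
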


We introduce assumptions on $t_1, \dots,t_s$, $p_1,\dots,p_n$. For $t_1, \dots,t_s$, we have the choice between the condition 
\par
$({\mathrm {C}}_t$): {\em $t_1, \dots,t_s$ are $\Q$--linearly independent}
\\
and the stronger condition 
\par
(${\mathrm {C}}^\star_t$): {\em$2\pi\rmi, t_1, \dots,t_s$ are $\Q$--linearly independent.}

\noindent
The condition (${\mathrm {C}}^\star_t$) is equivalent to each of the following ones
\par
(a) {\em 
The classes modulo $2\pi\rmi\Q$ of $t_1, \dots,t_s$ are $\Q$--linearly independent.}
\par
(b)
{\em $t_1, \dots,t_s$ are $\Q$--linearly independent and $(\Q t_1+\cdots+\Q t_s)\cap (2\pi\rmi\Q)=\{0\}$}. 
\par
(c)
{\em The points $\rme^{t_1},\dots,\rme^{t_s}$ are multiplicatively independent in $\C^\times$. }

\medskip

\noindent
In the same way, for $p_1,\dots,p_n$, we have the choice between the condition 
\par
(${\mathrm {C}}_p$): {\em $p_1, \dots,p_n$ are $k$--linearly independent}
\\
and the stronger condition 

\par
(${\mathrm {C}}^\star_p$): 
{\em
$\omega_1,\omega_2, p_1,\dots,p_n$ are $\Q$--linearly independent in the non--CM case,
$\omega_1,p_1,\dots,p_n$ are $k$--linearly independent in the CM case.
}

\noindent
The condition (${\mathrm {C}}^\star_p$) is equivalent to each of the following ones
\par
(a)
{\em The classes of $p_1,\dots,p_n$ modulo $\Omega\otimes_\Z \Q=\Q\omega_1+\Q\omega_2$ are $k$--linearly independent. }
\par 
(b)
{\em $p_1,\dots,p_n$ are $k$--linearly independent and 
$(kp_1+\cdots+kp_n) \cap \Omega =\{0\}$}.
\par
(c)
{\em Let $P_i=\exp_\cE(p_i)\in \cE(\C)$ ($i=1,\dots,n$). Then the points $P_1,\dots,P_n$ are $k$--linearly independent.
}

The assumptions of Conjecture \ref{Conjecture:SplitSemiElliptic} are {\rm{(${\mathrm {C}}_t$)}} and {\rm{(${\mathrm {C}}_p$)}}.
Lemma \ref{ChoiceBase1} below shows that the Elliptico--Toric Conjecture \ref{Conjecture:elliptico--toric} is equivalent to the following statement, where the assumptions are {\rm{(${\mathrm {C}}^\star_t$)}} and {\rm{(${\mathrm {C}}^\star_p$)}}. 

\begin{conjecture}\label{Conjecture:elliptico-toricBis}
Assume {\rm{(${\mathrm {C}}^\star_t$)}} and {\rm{(${\mathrm {C}}^\star_p$)}}. 
Then the field $K(\omega_1,\omega_2,\eta_1,\eta_2)$ has transcendence degree at least 
\[
s+\frac 4{[k:\Q]}+2n.
\]
\end{conjecture}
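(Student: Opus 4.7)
The plan is to deduce Conjecture \ref{Conjecture:elliptico-toricBis} from the Elliptico--Toric Conjecture \ref{Conjecture:elliptico--toric}: under the strengthened hypotheses (${\mathrm {C}}^\star_t$) and (${\mathrm {C}}^\star_p$), the two dimensions $\dim_\Q <t_\ell>_\ell$ and $\dim_{k}<p_i>_i$ appearing on the right--hand side of \eqref{Equation:trandeg} both attain their maximum possible values $s$ and $n$ respectively, and the general bound of Conjecture \ref{Conjecture:elliptico--toric} collapses to the specific bound $s+\frac 4{[k:\Q]}+2n$.

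For the first dimension I would invoke the equivalent formulation (a) of (${\mathrm {C}}^\star_t$), according to which the classes of $t_1,\dots,t_s$ modulo $2\pi\rmi\Q$ are $\Q$--linearly independent. Since $<t_\ell>_\ell$ is by construction the $\Q$--subspace of $\C/2\pi\rmi\Q$ generated by exactly these classes, this yields $\dim_\Q <t_\ell>_\ell=s$. For the second dimension I would invoke the equivalent formulation (a) of (${\mathrm {C}}^\star_p$), asserting that the classes of $p_1,\dots,p_n$ modulo $\Omega\otimes_\Z\Q=\Q\omega_1+\Q\omega_2$ are $k$--linearly independent in $\C/(\Omega\otimes_\Z\Q)$. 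This statement is uniform in the CM and non--CM cases (in the CM case $\omega_2=\tau\omega_1$, so $\Omega\otimes_\Z\Q=(\Q+\Q\tau)\omega_1=k\omega_1$). Hence $\dim_{k}<p_i>_i=n$.

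Plugging these two values into \eqref{Equation:trandeg} gives directly
\[
\mathrm{tran.deg}\;K(\omega_1,\omega_2,\eta_1,\eta_2)\geqslant s+\frac{4}{[k:\Q]}+2n,
\]
which is the content of Conjecture \ref{Conjecture:elliptico-toricBis}. This elementary dimension count is not where the difficulty lies: the genuine obstacle is Conjecture \ref{Conjecture:elliptico--toric} itself, an instance of the Grothendieck--Andr\'{e} generalized period conjecture applied to the 1--motive $M=[u\colon\Z\to\G_m^s\times\cE^n]$ with $u(1)=(\rme^{t_1},\dots,\rme^{t_s},P_1,\dots,P_n)$. Proving it would require identifying the transcendence degree of the full period algebra of $M$ with the dimension of its motivic Galois group, a problem far beyond the current state of the art — the unconditional cases known (LW Theorem \ref{Theorem:LW}, Chudnovsky's theorem, Philippon--W\"ustholz) cover only fragments of the required bound.
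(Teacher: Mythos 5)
Your derivation is correct and is essentially the paper's own route: under (${\mathrm{C}}^\star_t$) and (${\mathrm{C}}^\star_p$) the quotient spaces $<t_\ell>_\ell$ and $<p_i>_i$ have dimensions $s$ and $n$, so Conjecture \ref{Conjecture:elliptico-toricBis} is the immediate specialization of \eqref{Equation:trandeg}. The paper's Lemma \ref{ChoiceBase1} is only needed for the converse implication (recovering the full Elliptico--Toric Conjecture from this special case), which the statement itself does not require.
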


\begin{lemma} \label{ChoiceBase1}
 Consider $s$ complex numbers $ t_1, \dots, t_s $ and $n$ complex numbers $ p_1, \dots, p_n $ in $ \C \smallsetminus \Omega$. Let $t'_1, \dots ,t'_{s'}$ be a $\Q$--basis of the sub $\Q $--vector space $<t_\ell>_\ell$ of $\C / 2\pi\rmi \Q$ generated by the classes of $t_1, \dots, t_s $, and let $p'_1, \dots ,p'_{n'}$ be a $k$--basis of the sub $k $--vector space $<p_i>_i$ of $\C/(\Omega\otimes_\Z\Q)$ generated by the classes of $p_1, \dots, p_n. $ Then the field 
 \[
\Q \big(t_\ell, \rme^{t_\ell}, g_2,g_3,\omega_1, \omega_2,\eta_1,\eta_2, p_i, \wp(p_i),\zeta(p_i) \big)_{\audessus{\ell=1,\dots,s}{i=1, \dots,n}} 
\]
is a finite extension of the field
 \[
 \Q \big(t_\ell', \rme^{t_\ell'},g_2,g_3, \omega_1, \omega_2,\eta_1,\eta_2, p_i', \wp(p_i'), \zeta(p_i') \big)_{\audessus{ \ell=1,\dots,s' }{i=1, \dots,n' }}.
 \]
 Therefore the two fields have the same algebraic closure, and the same transcendence degree. 
\end{lemma}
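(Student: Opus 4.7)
The plan is to show that the larger field is obtained from the smaller one by adjoining finitely many elements, each algebraic over it; the stated equality of algebraic closures and of transcendence degrees then follows since both invariants are preserved by a finite extension.

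First I would use the definitions of the $t'_j$ and $p'_j$ as bases to express $t_\ell$ and $p_i$ in terms of the primed variables. Since the classes of $t'_1,\dots,t'_{s'}$ form a $\Q$--basis of $\langle t_\ell\rangle_\ell$ in $\C/2\pi\rmi\Q$, there are rationals $a_{\ell,j}$ and $r_\ell$ with $t_\ell = \sum_j a_{\ell,j}\, t'_j + 2\pi\rmi\, r_\ell$; Legendre's relation \eqref{Equation:Legendre} puts $2\pi\rmi$ inside $\Q(\omega_1,\omega_2,\eta_1,\eta_2)$, so $t_\ell$ already lies in the smaller field. Similarly, using that the classes of $p'_1,\dots,p'_{n'}$ form a $k$--basis of $\langle p_i\rangle_i$ in $\C/(\Omega\otimes_\Z\Q)$, I would write $p_i = \sum_j b_{i,j}\, p'_j + r_i\omega_1 + s_i\omega_2$ with $b_{i,j}\in k$ and $r_i,s_i\in\Q$; since $k\subseteq\Q(\omega_1,\omega_2)$ (trivially in the non--CM case, and via $\tau=\omega_2/\omega_1$ in the CM case), each $p_i$ also lies in the smaller field.

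Next I would show that the remaining generators $e^{t_\ell}$, $\wp(p_i)$ and $\zeta(p_i)$ are algebraic over the smaller field. Choosing a positive integer $N$ with $Na_{\ell,j}, Nr_\ell\in\Z$, one has
\[
(e^{t_\ell})^N = \prod_j (e^{t'_j})^{Na_{\ell,j}} \cdot e^{2\pi\rmi\, N r_\ell} = \prod_j (e^{t'_j})^{Na_{\ell,j}},
\]
which sits in the smaller field, so $e^{t_\ell}$ is algebraic of degree at most $N$ over it. For the Weierstrass values, writing $r_i\omega_1+s_i\omega_2 = \omega_i^{(1)}/c$ with $\omega_i^{(1)}\in\Omega$ and $c\in\Z_{>0}$, the expression $p_i = (1/c)\omega_i^{(1)} + \sum_j b_{i,j}\, p'_j$ is of precisely the shape required by Corollary \ref{Corollary:linearrelations}, which then yields the algebraicity of $\wp(p_i)$ and $\zeta(p_i)$ over $\Q(g_2,g_3,\omega_1,\omega_2,\eta_1,\eta_2,p'_j,\wp(p'_j),\zeta(p'_j))_j$, a subfield of the smaller field.

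The larger field is therefore generated over the smaller one by finitely many elements algebraic over it, hence finite algebraic over it, which gives the conclusion. I expect the only real obstacle to be a bookkeeping one: checking that the rational-period contribution $r_i\omega_1+s_i\omega_2$ and the imaginary-quadratic coefficients $b_{i,j}$ genuinely sit inside the smaller field without hidden algebraic extensions. Once these inclusions are secured via Legendre and, in the CM case, via $\tau=\omega_2/\omega_1$, the argument reduces to a direct application of the multiplication and division formulae of Section \ref{S:EllipticAndQuasiElliptic}, repackaged in Corollary \ref{Corollary:linearrelations}.
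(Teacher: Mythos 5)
Your proposal is correct and follows essentially the same route as the paper's proof: Legendre's relation places $2\pi\rmi$ (hence each $t_\ell$ and, after raising to a suitable power, each $\rme^{t_\ell}$) over the smaller field, and Corollary \ref{Corollary:linearrelations} handles $\wp(p_i)$ and $\zeta(p_i)$ from the decomposition $p_i=b_0\omega+\sum_j b_{i,j}p'_j$. The only cosmetic difference is that the paper first normalizes so that the primed elements are a sub-tuple of the original ones and then treats only the ``extra'' generators, whereas you carry general coefficients throughout.
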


\begin{proof} [Proof of Lemma \ref{ChoiceBase1}]
For ease of notation, we assume, as we may without loss of generality, $t'_\ell=t_\ell$ for $1\leqslant \ell\leqslant s'$ and $u_i=u'_i$ for $1\leqslant i\leqslant n'$.

If $s>s'$, then there is a linear relation 
\[
t_s=a_0 2\pi\rmi+a_1t_1+\cdots+a_{s'}t_{s'}
\]
with $a_\ell \in\Q$ ($\ell=0,1,\dots,s'$). Hence $\rme^{t_s}$ is algebraic over the field $\Q(\rme^{t_1},\dots,\rme^{t_{s'}})$. Recall Legendre relation \eqref{Equation:Legendre}: $2\pi\rmi$ belongs to $\Q(\omega_1,\omega_2,\eta_1,\eta_2)$.

If $n>n'$, then there is a linear relation 
\[
p_n=b_0\omega +b_1p_1+\cdots+b_{n'}p_{n'}
\]
with $b_i \in k$ ($i=0,1,\dots,n'$) and $\omega\in\Omega$. Corollary \ref{Corollary:linearrelations} implies that $\wp(p_n)$ and $\zeta(p_n)$ are algebraic over the field 
$\Q(g_2,g_3,p_1,\dots,p_{n'}, \wp(p_1),\dots,\wp(p_{n'}),\zeta(p_1),\dots,\zeta(p_{n'}))$.
\null \hfill $\square$
\end{proof}

 \begin{proof}[Proof of Theorem \ref{G<=>SplitSchanuel}]
 It remains to prove that Conjectures \ref{Conjecture:SplitSemiElliptic} and \ref{Conjecture:elliptico-toricBis} are equivalent. 
 
 \medskip
 \noindent
 {\em Conjecture \ref{Conjecture:SplitSemiElliptic} $\Longrightarrow$ Conjecture \ref{Conjecture:elliptico-toricBis}}
 
 We will use only the exceptional case in Conjecture \ref{Conjecture:SplitSemiElliptic} where the transcendence degree is at least $s+2n-1$. 
 
Let $(t_1,\dots,t_s)$ satisfy {\rm{(${\mathrm {C}}^\star_t$)}} and $(p_1,\dots,p_n)$ satisfy {\rm{(${\mathrm {C}}^\star_p$)}}. 

Assume the elliptic curve $\cE$ is not a CM curve. We deduce from Conjecture \ref{Conjecture:SplitSemiElliptic} applied to $2\pi\rmi,t_1,\dots,t_s$ and $\omega_1/2,\omega_2/2,p_1,\dots,p_n$, with $s,n$ replaced by $s',n'$, where $s'=s+1$, $n'=n+2$, that the field $K(\omega_1,\omega_2,\eta_1,\eta_2)$ has transcendence degree at least $s'+2n'-1=s+2n+4$.

Assume the elliptic curve $\cE$ is a CM curve. We deduce from Conjecture \ref{Conjecture:SplitSemiElliptic} applied to $2\pi\rmi,t_1,\dots,t_s$ and $\omega_1/2,p_1,\dots,p_n$, with $s,n$ replaced by $s',n'$, where $s'=s+1$, $n'=n+1$, that the field $K(\omega_1,\omega_2,\eta_1,\eta_2)$ has transcendence degree at least $s'+2n'-1=s+2n+2$.
 
 \medskip
 \noindent
 {\em Conjecture \ref{Conjecture:elliptico-toricBis} $\Longrightarrow$ Conjecture \ref{Conjecture:SplitSemiElliptic}}
 
 Let $(t_1,\dots,t_s)$ satisfy {\rm{(${\mathrm {C}}_t$)}} and $(p_1,\dots,p_n)$ satisfy {\rm{(${\mathrm {C}}_p$)}}. 
 We consider two cases for $(t_1,\dots,t_s)$,  denoted $(T)$ and ($T^\star$): 
 \[
 \dim_\Q \bigl((2\pi \rmi \Q )\cap ( \Q t_1+\cdots +\Q t_s) \bigr)=
 \begin{cases}
 1&\hbox{called {\em Case $(T)$}}
 \\
 0&\hbox{called {\em Case $(T^\star)$}}
 \end{cases}
 \]  
  We consider three cases for $(p_1,\dots,p_n)$ denoted ($P$), ($\tilde{P}$) and ($P^\star$): 
  \[
 \dim_k \bigl( (\Omega\otimes_\Z\Q ) \cap ( k p_1+\cdots+k p_n)   \bigr)=
 \begin{cases}
 2&\hbox{called {\em Case $(P)$}}
 \\ 
 1&\hbox{called {\em Case $(\tilde{P})$}}
 \\
 0&\hbox{called {\em Case $(P^\star)$}}
 \end{cases}
 \]  
 When  $s=0$ we are in the case $(T^\star)$ while when  $n=0$ we are in the case  $(P^\star)$. 

 We will need to consider several cases. The following reductions will introduce no loss of generality.
 
 \noindent
 $\bullet$
 $(T)$: \quad
 $2\pi\rmi\in \Q t_1+\cdots +\Q t_s$. 
 \par
 In this case we will assume $t_1=2\pi\rmi$, then $(t_2,\dots,t_s)$ satisfy {\rm{(${\mathrm {C}}^\star_t$)}}. We will apply Conjecture \ref{Conjecture:elliptico-toricBis} with  $s$ replaced by $s'=s-1$. Notice that $K$ contains $2\pi \rmi$.

 \noindent
 $\bullet$
($T^\star$): \quad 
 $(t_1,\dots,t_s)$ satisfy {\rm{(${\mathrm {C}}^\star_t$)}}. 
 \par
 In this case we will apply Conjecture \ref{Conjecture:elliptico-toricBis} with $s'=s$.

 \noindent
 $\bullet$
($P$): \quad 
$\Omega\subset k p_1+\cdots+k p_n$. 
 \par
 In the non--CM case we will assume $p_1=\omega_1/2$, $p_2=\omega_2/2$, so that $(p_3,\dots,p_n)$ satisfy {\rm{(${\mathrm {C}}^\star_p$)}} and we will apply Conjecture \ref{Conjecture:elliptico-toricBis} with $n$ replaced by $n'=n-2$. 
 \par
 In the CM case we will assume $p_1=\omega_1/2$, so that  $(p_2,\dots,p_n)$ will satisfy {\rm{(${\mathrm {C}}^\star_p$)}} and we will apply Conjecture \ref{Conjecture:elliptico-toricBis} with $n$ replaced by $n'=n-1$. 
 
In both cases non--CM or CM the numbers $\omega_1,\omega_2,\eta_1,\eta_2$ are algebraic over the field $K$.
 
 \noindent
 $\bullet$
($\tilde{P}$): \quad 
$\Omega\not\subset k p_1+\cdots+k p_n$ and $\Omega\cap(k p_1+\cdots+k p_n)\not=0$. 
 \par
 In this case $\cE$ is a non--CM elliptic curve and we will assume $p_1=\omega_1/2$, so that $(p_2,\dots, p_n)$ will satisfy {\rm{(${\mathrm {C}}^\star_p$)}} and we will apply Conjecture \ref{Conjecture:elliptico-toricBis} with $n$ replaced by $n'=n-1$. 
Notice that $K$ contains $\omega_1,\eta_1$.
 
 \noindent
 $\bullet$
 ($P^\star$): 
 \quad 
$(p_1,\dots,p_n)$ satisfy {\rm{(${\mathrm {C}}^\star_p$)}}. 
 \par
 In this case we will apply Conjecture \ref{Conjecture:elliptico-toricBis} with $n'=n$.

 \medskip 
The following remark will be useful: 
 Let $S$ be a finite subset of $\C$. Then 
\begin{equation}\label{Equation:remarque}
\mathrm{tran.deg}_\Q \, K = \mathrm{tran.deg}_\Q \, K(S) -\mathrm{tran.deg}_K \, K(S).
\end{equation}
In particular if we know that the transcendence degree of $K(S)$ over $\Q$ is at least $\tau+\sigma$ and that the transcendence degree of $K(S)$ over $K$ is at most $\sigma$, then  the transcendence degree of $K$ over $\Q$ is at least $\tau$ (in other terms {\em we remove $S$}).

\medskip

 We now consider each of the six cases.

 \noindent
 $\diamond$
 ($TP$): 
 \par
This is the exceptional case in Conjecture \ref{Conjecture:SplitSemiElliptic} where the conclusion is that the transcendence degree is at least $s+2n-1$. 

We have $s'=s-1$, $n'=n-\frac 2 {[k:\Q]}$, $s'+\frac 4 {[k:\Q]}+2n'=s+2n-1$. The field $K$ contains $\omega_1,\omega_2,\eta_1,\eta_2$.
 
 \noindent
 $\diamond$
 ($T^\star P$): 
 \par
 We have $s'=s$, $n'=n-\frac 2 {[k:\Q]}$, $s'+\frac 4 {[k:\Q]}+2n'=s+2n$. The numbers $\omega_1,\omega_2,\eta_1,\eta_2$ are algebraic over the field $K$.

 \noindent
 $\diamond$
 ($T\tilde{P}$): 
 \par
 $s'=s-1$, $n'=n-1$, $k=\Q$, $s'+2n'+ 4 =s+2n+1$. According to Conjecture \ref{Conjecture:elliptico-toricBis} the field $K(\eta_2)$ has transcendence degree at least $s+2n+1$. From remark \eqref{Equation:remarque} with $S=\{\eta_2\}$ we deduce that $K$ has transcendence degree at least $s+2n$.
 
 \noindent
 $\diamond$
 ($T^\star \tilde{P}$): 
 \par
 $s'=s$, $n'=n-1$, $k=\Q$, $s'+2n'+ 4 =s+2n+2$. According to Conjecture \ref{Conjecture:elliptico-toricBis} the field $K(\omega_2,\eta_2)$ has transcendence degree at least $s+2n+2$. From  remark \eqref{Equation:remarque}  with $S=\{\omega_2,\eta_2\}$ we deduce that  $K$ has transcendence degree at least $s+2n$.

 \noindent
 $\diamond$
 ($TP^\star$): 
 \par
 $s'=s-1$, $n'=n$, $s'+\frac 4 {[k:\Q]}+2n' =s+\frac 4 {[k:\Q]}-1+2n$. According to Conjecture \ref{Conjecture:elliptico-toricBis} the field $K(\omega_1,\omega_2,\eta_2)$ has transcendence degree at least $s+\frac 4 {[k:\Q]}-1+2n$. Since $\Q(g_2,g_3,\omega_1,\omega_2,\eta_2)$ has transcendence degree at most $\frac 4 {[k:\Q]}-1$ over $\Q(g_2,g_3)$, we deduce from  remark \eqref{Equation:remarque}  with $S=\{\omega_1,\omega_2,\eta_2\}$  that $K$ has transcendence degree at least $s+2n$.
 
 \noindent
 $\diamond$
 ($T^\star P^\star$): 
 \par 
 $s'=s$, $n'=n$. 
 Conjecture \ref{Conjecture:elliptico-toricBis} implies that the field $K(\omega_1,\omega_2,\eta_1,\eta_2)$ has transcendance degree at least $s+\frac 4 {[k:\Q]}+2n$. Since $\Q(g_2,g_3,\omega_1,\omega_2,\eta_1,\eta_2)$ has transcendance degree at most $\frac 4 {[k:\Q]}$ over $\Q(g_2,g_3)$, we deduce from  remark \eqref{Equation:remarque}  with $S=\{\omega_1,\omega_2,\eta_2\}$  that $K$ has transcendance degree at least $s+2n$. 
\null\hfill$\square$
\end{proof}
 
\section{Some consequences of the Split Semi--Elliptic Conjecture \ref{Conjecture:SplitSemiElliptic}}\label{S:ConsequencesSplitElliptic} 

We give proofs of several consequences of Conjecture \ref{Conjecture:SplitSemiElliptic} stated in section \ref{S:The Split Semi--Elliptic Conjecture}

 We start with a proof that the Split Semi--Elliptic Logarithms Conjecture \ref{Conjecture:SplitSemiEllipticLogarithms} is a consequence of Conjecture \ref{Conjecture:SplitSemiElliptic}.

 \begin{proof}[Proof of Conjecture \ref{Conjecture:SplitSemiElliptic} $\Longrightarrow$ Conjecture \ref{Conjecture:SplitSemiEllipticLogarithms}]
 If either $2\pi\rmi\not\in \Q t_1+\cdots+\Q t_s$ or $\Omega\not\subset k p_1+\cdots+kp_n$, then Conjecture \ref{Conjecture:SplitSemiElliptic} implies that the transcendence degree of the field
 \[
\Q\bigl( \log\alpha_1,\dots,\log\alpha_s,\; p_1,\dots,p_n, \wp(p_{m+1)}, \dots,\wp(p_n), \zeta(p_1), \dots,\zeta(p_m)\bigr)
 \]
 is at least $s+2n$. If we remove the $n$ numbers $\zeta(p_1), \dots,\zeta(p_m), \wp(p_{m+ 1)}, \dots,\wp(p_n)$ (see remark \eqref{Equation:remarque}), the transcendence degree is at least $s+n$.

 Now we assume $2\pi\rmi\Q\subset \Q t_1+\cdots+\Q t_s$ and $\Omega\subset k p_1+\cdots+kp_n$. Without loss of generality we may assume $t_1=2\pi\rmi$ and $2p_1,2p_2\in\Omega$ in the non--CM case, $2p_1\in\Omega$ in the CM case.
 
 We start with the non--CM case. Since $\eta_1$ and $\eta_2$ are transcendental, we have $m\geqslant 3$. Recall Legendre relation \eqref{Equation:Legendre}: $\eta_2\in \Q(2\pi\rmi,\omega_1, \omega_2,\eta_1)$. Conjecture \ref{Conjecture:SplitSemiElliptic} implies that the transcendence degree of the field
 \[
\Q\bigl(2\pi\rmi, \log\alpha_2,\dots,\log\alpha_s,\; \omega_1,\omega_2,\eta_1, p_3,\dots,p_n, \wp(p_{m+1}), \dots,\wp(p_n), \zeta(p_3), \dots,\zeta(p_m)\bigr)
 \]
 is at least $s+2n-1$. If we remove the $n-1$ numbers $\eta_1, \wp(p_{m+ 1}), \dots,\wp(p_n), \zeta(p_3), \dots,\zeta(p_m) $, the transcendence degree of the remaining field is at least $s+n$.

 Finally consider the CM case. Since $\eta_1$ is transcendental, we have $m\geqslant 2$. From \eqref{Equation:Legendre} and \eqref{Equation:kappa} it follows that $\omega_2$, $\eta_1$ and $\eta_2$ are algebraic over the field $\Q(2\pi\rmi,\omega_1)$. Conjecture \ref{Conjecture:SplitSemiElliptic} implies that the transcendence degree of 
 \[
\Q\bigl(2\pi\rmi, \log\alpha_2,\dots,\log\alpha_s,\; \omega_1,\ p_2,\dots,p_n, \wp(p_{m+1}), \dots,\wp(p_n), \zeta(p_2), \dots,\zeta(p_m)\bigr)
 \]
 is at least $s+2n-1$. If we remove the $n-1$ numbers $ \wp(p_{m+ 1}), \dots,\wp(p_n), \zeta(p_2), \dots,\zeta(p_m) $, the transcendence degree of the remaining field is at least $s+n$.
\null\hfill$\square$
\end{proof}
 
Let us show now that Conjecture \ref{Conjecture:SplitSemiElliptic} implies a strong form of Schneider's Theorem \ref{Th:Schneider}. 
 
\begin{proposition}\label{G=>S}
Assume Conjecture \ref{Conjecture:SplitSemiElliptic}.
\begin{enumerate}
\item
Let $p\in\C \smallsetminus\Omega$. Then the transcendence degree of the field generated by the five numbers 
\[
 g_2 ,\; g_3 ,\; p ,\; \wp(p) ,\; \zeta(p)
\]
is at least $2$. 
 \item
 Let $t$ be a nonzero complex number and let $p\in\C \smallsetminus\Omega$. Then the transcendence degree of the field generated by the seven numbers 
\[
t,\; \rme^t,\; g_2,\;g_3,\; p,\; \wp(p),\; \zeta(p)
\]
is at least $3$, unless $\cE$ is a CM curve, $t\in 2\pi\rmi \Q$ and $p\in\Omega\otimes_\Z\Q$, in which case it is only $2$.
\item
Let $p_1$ and $p_2$ be two elements of $\C \smallsetminus\Omega$ such that $p_2/p_1\not\in k$. Then the transcendence degree of the field generated by the eight numbers 
\[
g_2,\;g_3,\; p_1,\; p_2,\; \wp(p_1),\; \wp(p_2),\; \zeta(p_1),\;\zeta(p_2)
\]
is at least $4$.
\end{enumerate} 
\end{proposition}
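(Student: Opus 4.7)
The plan is to apply the Split Semi--Elliptic Conjecture \ref{Conjecture:SplitSemiElliptic} directly in each of the three items, choosing $(s,n)$ appropriately and identifying when the exceptional case occurs.

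For item (1), I would take $s=0$ and $n=1$ with the single element $p_1=p$. Since $p\in\C\smallsetminus\Omega$ implies $p\neq 0$, the singleton $(p)$ is $k$--linearly independent. When $s=0$ the condition $2\pi\rmi\Q\subset \Q t_1+\cdots+\Q t_s=\{0\}$ fails, so the non--exceptional case of Conjecture \ref{Conjecture:SplitSemiElliptic} applies and gives $\mathrm{tran.deg}\geqslant s+2n=2$.

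For item (2), I would take $s=n=1$ with $t_1=t$ and $p_1=p$. The hypotheses $t\neq 0$ and $p\in\C\smallsetminus\Omega$ give both the $\Q$--linear independence of $(t)$ and the $k$--linear independence of $(p)$. The exceptional case requires both $2\pi\rmi\in\Q t$ (equivalently $t\in 2\pi\rmi\Q$) and $\Omega\subset kp$. The second cannot hold in the non--CM case, where $k=\Q$ and $\dim_\Q(\Omega\otimes_\Z\Q)=2$, so $\Omega\otimes_\Z\Q$ cannot lie inside the one--dimensional space $\Q p$. In the CM case, $\Omega=\Z\omega_1+\Z\tau\omega_1\subset k\omega_1$, so $\Omega\subset kp$ is equivalent to $\omega_1\in kp$, i.e.\ $p\in k\omega_1=\Omega\otimes_\Z\Q$. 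Hence the exceptional case is precisely the one identified in the statement; outside it Conjecture \ref{Conjecture:SplitSemiElliptic} gives $\mathrm{tran.deg}\geqslant 3$, and inside it gives $\mathrm{tran.deg}\geqslant 2$.

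For item (3), I would take $s=0$ and $n=2$ with $(p_1,p_2)$ as given. The hypothesis $p_2/p_1\notin k$ (together with $p_1\neq 0$) is exactly the $k$--linear independence of $(p_1,p_2)$. Once again $s=0$ excludes the exceptional case, so Conjecture \ref{Conjecture:SplitSemiElliptic} yields $\mathrm{tran.deg}\geqslant 0+2\cdot 2=4$. There is essentially no obstacle beyond this bookkeeping: each item reduces to a single careful application of Conjecture \ref{Conjecture:SplitSemiElliptic}, the only slightly delicate point being the identification of the exceptional case in item (2), as sketched above.
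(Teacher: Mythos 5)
Your proof is correct and follows essentially the same route as the paper: the authors handle items (1) and (3) by invoking the case $s=0$ of Conjecture \ref{Conjecture:SplitSemiElliptic} (stated separately as Conjecture \ref{Conjecture:EllipticSchanuel}) with $n=1$ and $n=2$ respectively, and item (2) by applying Conjecture \ref{Conjecture:SplitSemiElliptic} with $s=n=1$, exactly as you do. Your extra verification that the exceptional case with $n=1$ forces $\cE$ to be CM and $p\in\Omega\otimes_\Z\Q$ is a correct and welcome elaboration of a point the paper leaves implicit.
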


\begin{proof} [Proof of Proposition \ref{G=>S}]
We assume Conjecture \ref{Conjecture:SplitSemiElliptic}. Hence Conjecture \ref{Conjecture:EllipticSchanuel} is true. 
\\
(1) 
This is the case $n=1$ of Conjecture \ref{Conjecture:EllipticSchanuel}. 
\\
(2)
We apply Conjecture \ref{Conjecture:SplitSemiElliptic} with $s=n=1$. The transcendence degree is at least $s+2n=3$, unless $2\pi\rmi\in\Q t_1$ and $\Omega\subset kp_1$, in which case it is at least $s+2n-1=2$. 
\\
(3)
This is the case $n=2$ of Conjecture \ref{Conjecture:EllipticSchanuel}.
\null\hfill$\square$
\end{proof}

 \begin{proof} [Proof of Proposition \ref{G=>S} $\Longrightarrow$ Schneider's Theorem \ref{Th:Schneider}]
With the notations of Theorem \ref{Th:Schneider}, we  assume that $g_2 , g_3$ and $ \wp(p)$ are algebraic and  that Proposition \ref{G=>S} is true.
\\
(1) From part 1 of Proposition \ref{G=>S}, we deduce that  $p$ and $\zeta(p)$ are algebraically independent, hence $1$, $p$ and $\zeta(p)$ are linearly independent over the field of algebraic numbers. 
\\
(2)  In part 2 of Proposition \ref{G=>S}, the special case where $\cE$ is a CM curve, $t\in 2\pi\rmi \Q$ and $p\in\Omega\otimes_\Z\Q$ means that if $\omega$ is a nonzero period, then $\omega$ and $\pi$ are algebraically independent (recall Legendre relation \eqref{Equation:Legendre} and relation \eqref{Equation:kappa}). 
\\
Consider part 2 of Theorem \ref{Th:Schneider}. Set $t=\alpha p$. The transcendence of $\omega/\pi$ proves the desired result  when $\cE$ is a CM curve, $t\in 2\pi\rmi \Q$ and $p\in\Omega\otimes_\Z\Q$. Otherwise the transcendence degree of the field  
\[
\Q\bigl(\alpha,\; \rme^{\alpha p},\;   p,\;   \zeta(p)\bigr)
\]
is at least $3$, hence if $\alpha$ is algebraic then $ \rme^{\alpha p},\;   p,\;   \zeta(p)$  are algebraically independent, and therefore $ \rme^{\alpha p}$ is transcendental.
\\
(3) Assume that $\alpha$ is algebraic. Set $p_1=p$, $p_2=\alpha p_1$. By assumption the algebraic number $\alpha=p_2/p_1$ is not in $k$.
By part 3 of Proposition \ref{G=>S} the four numbers 
\[
 p,\; \wp(\alpha p),\; \zeta(p),\;\zeta( \alpha p)
\]
are algebraically independent and consequently $\wp(\alpha p)$ is transcendental. 
\null\hfill$\square$
\end{proof}

The $\zeta$--Conjecture \ref{Conjecture:zeta}. is also a consequence of Conjecture \ref{Conjecture:SplitSemiElliptic}, as shown by the next result.

\begin{proposition}\label{G=>W}
 Conjecture \ref{Conjecture:EllipticSchanuel} implies the $\zeta$--Conjecture \ref{Conjecture:zeta}.
\end{proposition}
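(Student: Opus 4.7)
The plan is to reduce to Conjecture \ref{Conjecture:EllipticSchanuel} applied with $n=1$ and $n=2$, splitting into the cases $\alpha\in k$ and $\alpha\notin k$. The starting input is Conjecture \ref{Conjecture:EllipticSchanuel} with $n=1$ and $p_1=p$: among the five numbers $g_2,g_3,p,\wp(p),\zeta(p)$ at least two are algebraically independent; since $g_2,g_3,\zeta(p)\in\Qbar$ by hypothesis, this forces $p$ and $\wp(p)$ to be algebraically independent over $\Qbar$, and in particular $\wp(p)$ is transcendental. This fact is the leverage behind every subsequent contradiction.

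When $\alpha\in k$, the assumptions on $\alpha$ prevent it from being a root of unity of $k$, so Corollary \ref{CorollaryMultiplicationFormula} produces algebraic elements $\beta_1,\beta_2$ such that the function $\Xi_\alpha(z):=\zeta(\alpha z)-\beta_1\zeta(z)-\beta_2z$ is non-constant and algebraic over $\Q(g_2,g_3,\wp(z))$. If $\alpha p$ lay in $\Omega$, then $p=\alpha^{-1}(\alpha p)\in k\Omega=\Omega\otimes_\Z\Q$ would be a division point of $\Omega$, and Lemma \ref{Lemma:division}.3 would place $\wp(p)$ in $\Qbar$, contradicting the first paragraph; hence $\alpha p\notin\Omega$. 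Assume now by contradiction that $\zeta(\alpha p)\in\Qbar$; evaluating the multiplication formula at $z=p$ gives $\beta_2 p+\Xi_\alpha(p)\in\Qbar$. If $\beta_2\ne 0$, solving for $p$ yields $p\in\Qbar(\Xi_\alpha(p))\subseteq\overline{\Qbar(\wp(p))}$, so $p$ is algebraic over $\Qbar(\wp(p))$, a contradiction. If $\beta_2=0$, then $\Xi_\alpha(p)\in\Qbar$; the non-constancy of $\Xi_\alpha$ ensures that the minimal polynomial of $\Xi_\alpha(z)$ over $\Qbar(\wp(z))$, specialized at $\Xi_\alpha(p)$, produces a nonzero polynomial in $\Qbar[Y]$ vanishing at $\wp(p)$, forcing $\wp(p)\in\Qbar$, again a contradiction.

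When $\alpha\notin k$, the points $p$ and $\alpha p$ are $k$-linearly independent. To rule out $\alpha p\in\Omega$, suppose $\alpha p=\omega\in\Omega\smallsetminus\{0\}$ and choose $M\ge 1$ with $q:=\omega/M\notin\Omega$; the pair $(p,q)$ is still $k$-linearly independent, since any relation $ap+bq=0$ with $a,b\in k$ reduces to $aM+b\alpha=0$ and forces $a=b=0$ because $\alpha\notin k$. Conjecture \ref{Conjecture:EllipticSchanuel} with $n=2$ predicts $\mathrm{tran.deg}\,\Q(g_2,g_3,p,q,\wp(p),\wp(q),\zeta(p),\zeta(q))\ge 4$, but Lemma \ref{Lemma:division}.3 places $\wp(q)$ and $\zeta(q)-\eta(\omega)/M$ in $\Qbar$, while $p,q\in\Qbar(\omega)$ and $\zeta(p),g_2,g_3\in\Qbar$, so this field embeds in $\Qbar(\omega,\wp(p),\eta(\omega))$ of transcendence degree at most $3$: contradiction. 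With $\alpha p\notin\Omega$ secured, Conjecture \ref{Conjecture:EllipticSchanuel} applied to $(p,\alpha p)$ gives transcendence degree at least $4$ for the field $\Q(g_2,g_3,p,\alpha p,\wp(p),\wp(\alpha p),\zeta(p),\zeta(\alpha p))$; since $g_2,g_3,\alpha,\zeta(p)\in\Qbar$, the four numbers $p,\wp(p),\wp(\alpha p),\zeta(\alpha p)$ are forced to be algebraically independent, so $\zeta(\alpha p)\notin\Qbar$. The main obstacle is the transcendence-degree accounting that rules out $\alpha p\in\Omega$ in this last case, where one must produce the auxiliary $q$ in the lattice generated by $\omega=\alpha p$ and track precisely which generators become algebraic in order to beat the Elliptic Schanuel lower bound of $4$; a secondary subtlety is the $\beta_2=0$ subcase of $\alpha\in k$, where the non-constancy of $\Xi_\alpha$ has to be converted into a non-vanishing specialization of its minimal polynomial.
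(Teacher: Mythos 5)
Your proof is correct and follows the paper's overall strategy: the $n=1$ case of Conjecture \ref{Conjecture:EllipticSchanuel} gives the algebraic independence of $p$ and $\wp(p)$; the case $\alpha\notin k$ is settled by the $n=2$ case applied to $(p,\alpha p)$; and the case $\alpha\in k$ rests on Corollary \ref{CorollaryMultiplicationFormula} and the non--constancy of $\Xi_\alpha$. You deviate in two local steps, both legitimately. First, to exclude $\alpha p\in\Omega$ the paper runs a single argument valid for every nonzero algebraic $\beta$, applying Conjecture \ref{Conjecture:EllipticSchanuel} to the half--periods together with $\omega/\beta$ ($n=3$ in the non--CM case, $n=2$ in the CM case); you instead dispose of $\alpha\in k$ by the division--point observation (Lemma \ref{Lemma:division}.3) and of $\alpha\notin k$ by an $n=2$ application to $(p,\omega/M)$, which avoids the CM/non--CM dichotomy at the price of a separate transcendence--degree count (which you carry out correctly: the field lands in $\Qbar(\omega,\wp(p),\eta(\omega))$, of degree at most $3<4$). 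Second, for $\alpha\in k$ the paper invokes Senthil Kumar's Lemma \ref{Lemma:SenthilKumar} to obtain the transcendence of $\Xi_\alpha(p)$ and then compares algebraic closures, whereas your direct specialization argument (splitting on $\beta_2=0$ or not, and using the transcendence of $\wp(p)$ over $\Qbar$ to guarantee nonvanishing of the specialized relation) in effect reproves the special case of that lemma that is needed, making the argument self--contained. One small imprecision: your justification that $\Xi_\alpha$ is non--constant because ``$\alpha$ is not a root of unity of $k$'' is not quite the right criterion --- Corollary \ref{CorollaryMultiplicationFormula} characterizes constancy by $\alpha$ being an automorphism of $\cE$ (i.e.\ $\alpha=\pm1$, or $\alpha^4=1$ with $g_3=0$, or $\alpha^6=1$ with $g_2=0$), and for instance $\alpha=\rmi$ can be a root of unity of $k$ without $\Xi_\alpha$ being constant when $g_3\neq0$; however, the hypotheses of Conjecture \ref{Conjecture:zeta} exclude exactly the constancy cases of the Corollary, so the conclusion you draw is correct.
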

 
The following auxiliary result \cite[Lemma 3.1]{K} will be useful:

\begin{lemma}
[Senthil Kumar]
\label{Lemma:SenthilKumar}
Let $\Omega$ be a lattice in $\C$ with invariants $g_2$, $g_3$. Let $K$ be a subfield of $\C$, let $f$  a nonconstant function in $K(g_2,g_3,\wp(z),\wp'(z))$ and $p\in\C\smallsetminus\Omega$; assume that $\wp(p)$ is transcendental over the field $K(g_2,g_3)$ and that $p$ is not a pole of $f$. Then $f(p)$ is transcendental over the field $K(g_2,g_3)$.
\end{lemma}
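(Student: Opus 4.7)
The plan is to exploit the algebraic relation $\wp'(z)^2 = 4\wp(z)^3 - g_2\wp(z) - g_3$, which shows that the field extension $K(g_2,g_3)(\wp(z),\wp'(z))$ over $K(g_2,g_3)(\wp(z))$ has degree at most $2$. Every element $f$ of $K(g_2,g_3)(\wp(z),\wp'(z))$ can therefore be written uniquely in the form
\[
f(z) = A(\wp(z)) + B(\wp(z))\,\wp'(z),
\]
where $A$ and $B$ lie in $K(g_2,g_3)(X)$. I would then argue by contradiction: assume that $\alpha := f(p)$ is algebraic over $K(g_2,g_3)$.

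Substituting $z = p$, rearranging and squaring to eliminate $\wp'(p)$ yields the identity
\[
\bigl(\alpha - A(\wp(p))\bigr)^2 = B(\wp(p))^2\bigl(4\wp(p)^3 - g_2\wp(p) - g_3\bigr).
\]
Clearing denominators and collecting, this expresses a polynomial relation in $\wp(p)$ with coefficients in $\overline{K(g_2,g_3)}$. Since $\wp(p)$ is transcendental over $K(g_2,g_3)$, it is transcendental over its algebraic closure $\overline{K(g_2,g_3)}$, so the relation must hold identically as an equality of rational functions in $\overline{K(g_2,g_3)}(X)$:
\[
\bigl(\alpha - A(X)\bigr)^2 = B(X)^2\bigl(4X^3 - g_2 X - g_3\bigr).
\]

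Now I would split into two cases. If $B = 0$ as a rational function, the identity forces $A(X) = \alpha$, so $f = A \circ \wp$ is constant, contradicting the hypothesis. If $B \neq 0$, the main step is a parity-of-multiplicity argument: the discriminant $16(g_2^3-27g_3^2)$ of the cubic $4X^3 - g_2 X - g_3$ is nonzero (because $\Omega$ is an honest lattice), hence the cubic is squarefree in $\overline{K(g_2,g_3)}[X]$ and has a linear factor $X - r$ occurring with multiplicity exactly one. Writing $A,B$ with coprime numerators and denominators and comparing $\mathrm{ord}_{X-r}$ on both sides, the left-hand side has even order at $r$, while the right-hand side has odd order at $r$ (since $B^2$ contributes an even order and the cubic contributes $+1$). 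This is the crux of the proof. The parities can match only when both sides vanish identically, which as before forces $B = 0$ and $A = \alpha$, contradicting $f$ nonconstant.

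The only mildly delicate point is handling the case where $r$ is also a zero or pole of $A$ or $B$; this is managed by first reducing $A - \alpha$ and $B$ to coprime-polynomial form over $\overline{K(g_2,g_3)}$ and then reading off $\mathrm{ord}_{X-r}$ for each factor separately, so that the contribution of the squarefree cubic is isolated. Modulo this bookkeeping, the argument is routine. No appeal to transcendence theory beyond the transcendence of $\wp(p)$ is needed; the proof is purely algebraic in the function field of $\mathcal{E}$.
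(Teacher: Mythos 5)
The paper does not prove this lemma at all: it is quoted verbatim from the reference, as Lemma~3.1 of Senthil Kumar's paper [K23], so there is no in-paper argument to compare against. Your proof is nevertheless correct and self-contained. The decomposition $f = A(\wp) + B(\wp)\wp'$ with $A,B \in K(g_2,g_3)(X)$ is legitimate because $\{1,\wp'\}$ is a basis of $K(g_2,g_3)(\wp,\wp')$ over $K(g_2,g_3)(\wp)$; the one point worth making explicit is that $A$ and $B$ can be evaluated at $\wp(p)$ precisely because their poles are algebraic over $K(g_2,g_3)$ while $\wp(p)$ is transcendental, so the numerical identity $f(p)=A(\wp(p))+B(\wp(p))\wp'(p)$ does hold. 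The passage from the numerical relation to the identity of rational functions is correctly justified by the transcendence of $\wp(p)$ over $\overline{K(g_2,g_3)}$, and the parity-of-valuation argument at a root $r$ of the squarefree cubic is airtight: $\mathrm{ord}_{X-r}$ is a valuation on $\overline{K(g_2,g_3)}(X)$, so the left side has even order and the right side order $2\,\mathrm{ord}_{X-r}(B)+1$, which is odd; the ``coprime-polynomial bookkeeping'' you worry about is in fact unnecessary once one views $\mathrm{ord}_{X-r}$ as a valuation. The degenerate subcases ($B=0$, or $A=\alpha$ with $B\neq 0$) are correctly disposed of. So this is a valid purely algebraic proof of the cited lemma, arguably more elementary than invoking general specialization arguments on the function field of $\cE$.
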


\begin{proof} [Proof of Proposition \ref{G=>W}]
We assume that $g_2$ and $g_3$ are algebraic.
Let $p\in\C \smallsetminus\Omega$. Assume that $\zeta(p$) is algebraic. According to Schneider's Theorem \ref{Th:Schneider}.1, $\wp(p)$ is transcendental. In particular (Lemma \ref{Lemma:division}.3) $P$ is not a torsion point: $p\not\in\Omega\otimes_\Z\Q$. Let $\alpha\in\C\smallsetminus\{0\}$.

Assume Conjecture \ref{Conjecture:EllipticSchanuel}.
\\
(1)
We first show that the assumption that $\zeta(p)$ is algebraic implies that for any nonzero algebraic number $\beta$, we have $\beta p\not\in\Omega$. 
Otherwise assume $\omega:=\beta p\in\Omega$. Since $p$ is not a torsion point and since $\beta p$ is a torsion point, we deduce $\beta\not\in k$. From Conjecture \ref{Conjecture:EllipticSchanuel} with $n=3$, $p_1=\omega_1/2$, $p_2=\omega_2/2$, $p_3=\omega/\beta$ in the non--CM case, $n=2$, $p_1=\omega_1/2$, $p_2=\omega/\beta$ in the CM case, we deduce that the two numbers $\wp(\omega/\beta)=\wp(p)$ and $\zeta(\omega/\beta)=\zeta(p)$ are algebraically independent over the field $\Q(\omega_1,\omega_2,\eta_1,\eta_2)$. This implies that $\zeta(p)$ is transcendental, which is a contradiction. 

In particular $\alpha p\not\in\Omega$.
\\
(2)
Assume $\alpha\not\in k$. From (1) it follows that $(k+k\alpha)p\cap\Omega=\{0\}$, hence $p$ and $\alpha p$ are $k$--linearly independent modulo $\Omega\otimes_\Z\Q$. Since $g_2$, $g_3$, $\alpha$ and $\zeta(p)$ are algebraic, item 3 of Proposition \ref{G=>S} with $p_1=p$ and $p_2=\alpha p$,
implies that the four numbers 
\[
p,\; \wp( p),\; \wp(\alpha p),\; \zeta( \alpha p)
\] 
are algebraically independent. In particular $\zeta(\alpha p)$ is transcendental. 
\\
(3) Assume $\alpha \in k$. Recall that $p\not\in\Omega\otimes_\Z\Q$.
Since $g_2$, $g_3$ and $\zeta(p)$ are algebraic, item 1 of Proposition \ref{G=>S} implies that the two numbers $p$ and $\wp( p)$ are algebraically independent. 
We apply Corollary \ref{CorollaryMultiplicationFormula}: from the hypotheses of Proposition \ref{G=>W}, we deduce that the elliptic function $\Xi_\alpha$ in \eqref{Equation:Xialpha} is not constant. By Lemma \ref{Lemma:SenthilKumar} the number $\Xi_\alpha(p)$ is transcendental. The two transcendental numbers $\Xi_\alpha(p)$  and $\wp(p)$ are algebraically dependent, hence the fields $\Q\bigl(\Xi_\alpha(p)\bigr)$ and $\Q(\wp(p)\bigr)$ have the same algebraic closure. 
From  \eqref{Equation:Xialpha} we deduce that the three fields $\Q\bigl(p,\zeta(p),\zeta(\alpha p)\bigr)$, $\Q\bigl(p,\zeta(p),\zeta(\alpha p),\Xi_\alpha(p)\bigr)$ and  $\Q\bigl(p,\zeta(p),\zeta(\alpha p),\wp(p)\bigr)$ have the same algebraic closure, hence have transcendence degree at least $2$. 
Since $\zeta(p)$ is algebraic, if follows that $p$ and $\zeta(\alpha p)$ are algebraically independent; in particular $\zeta(\alpha p)$ is transcendental.
\null\hfill $ \square$
\end{proof}

\bibliographystyle{plain}

\authoraddresses{
Cristiana Bertolin\\
Dipartimento di Matematica, Universit\`a di Padova, Via Trieste 63, Padova\\
\email cristiana.bertolin@unipd.it

Michel Waldschmidt\\
Sorbonne Universit\'e, Universit\'e Paris Cit\'e, CNRS, IMJ-PRG, F-75005 Paris, France\\
\email michel.waldschmidt@imj-prg.fr 
}

\end{document}